\documentclass[12pt, reqno]{amsart}

\usepackage{todonotes}
\usepackage[utf8]{inputenc}
\usepackage{lmodern}

\usepackage{comment}

\usepackage[scale=0.83,centering]{geometry}
\usepackage{amsmath,amsfonts,mathabx,amssymb,amsthm,graphicx,bbm}
\usepackage{color,mathrsfs,tikz,hyperref,extarrows}
\usepackage[font={footnotesize}]{caption}

\usetikzlibrary{arrows,automata}

\numberwithin{equation}{section}

\newcommand{\bbE}{{\ensuremath{\mathbbm E}} }

\newcommand{\bbN}{{\ensuremath{\mathbbm N}} }

\newcommand{\bbP}{{\ensuremath{\mathbbm P}} }

\newcommand{\bbR}{{\ensuremath{\mathbbm R}} }

\newcommand{\cH}{{\ensuremath{\mathcal H}} }

\newcommand{\cN}{{\ensuremath{\mathcal N}} }

\newcommand{\bE}{{\ensuremath{\mathbf E}} }

\newcommand{\bP}{{\ensuremath{\mathbf P}} }

\newcommand{\gep}{\varepsilon}       

\newcommand{\sL}{\mathsf{L}}

\newcommand{\what}{\widehat}

\newcommand{\ind}{\mathbf{1}}
\newcommand{\dd}{{\ensuremath{\mathrm d}}}
\newcommand{\cov}{\text{Cov}}
\newcommand{\var}{\text{Var}}

\newtheorem{theorem}{Theorem}[section]

\newtheorem{lemma}[theorem]{Lemma}

\newtheorem{proposition}[theorem]{Proposition}
\newtheorem{remark}{Remark}[section]

\newcommand{\red}{\color{red}}

\begin{document}

\title[Pinning model beyond the $L^2$-regime]{Scaling limit for the  pinning model in correlated Gaussian environment beyond the $L^2$-regime}

\author[J.\ Song]{Jian Song}\address{Research Center for Mathematics and Interdisciplinary Sciences, Shandong University, Qingdao, China} \email{txjsong@sdu.edu.cn}

\author[M.\ Wang]{Meng Wang}\address{Center for Applied Mathematics, Tianjin University, Tianjin, China}
\email{wmeng\_2206@tju.edu.cn}

\author[R.\ Wei]{Ran Wei}\address{Department of Financial and Actuarial Mathematics, Xi'an Jiaotong-Liverpool University, Suzhou 215123, China}
\email{ran.wei@xjtlu.edu.cn}

\date{}

\begin{abstract}
In this paper, we study the scaling limit of the  pinning model in correlated Gaussian environment. The tail probability of the underlying renewal process of the model has a polynomial decay with exponent $\alpha>0$. The covariance of the Gaussian environment $\{\omega_n\}_{n\in\bbN}$ is given by $\cov_{\bbP}(\omega_n,\omega_m)\sim 
|n-m|^{2H-2}$ with $H\in(0,1)$.  Assuming $\alpha\in(0,\frac12]$, $H\in(\frac12,1)$ and $\alpha+2H>2$, we show that the partition function of the disordered pinning model, under the appropriate scaling, converges in distribution to the $L^1$-solution of the fractional stochastic heat equation driven by  Gaussian noise correlated in time and localized  at the origin. In particular, it is known that the solution is not $L^2$-integrable when $\alpha<\frac12$.

\vspace{0.2cm}
\textit{Keywords}:
Disordered pinning model; stochastic heat equation; correlated Gaussian environment; scaling limit; Weinrib--Halperin prediction; $L^1$-solution
\end{abstract}

\maketitle

\tableofcontents

\section{Introduction}\label{sec:intro}
We study the scaling limit for the partition function of the disordered pinning model in correlated Gaussian environment. The disordered pinning model is a class of random polymer models. Despite its simple setup, the model exhibits rich behavior and is particularly well suited for studying phase transitions and critical phenomena. During the last two decades, the disordered pinning model has received widespread attention in mathematics, physics, chemistry and biology. We refer to \cite{giacomin:07:random,Gia11,hollander:09:random} for an overview and more details.

In the study of random polymers in random environments (also referred to as \emph{disorder}) such as the disordered pinning model, a central question is whether disorder is relevant, that is, whether an arbitrarily small amount of disorder fundamentally alters the long-term behavior of the model compared with the case without disorder. For correlated disorder, a criterion for determining the relevance of disorder was proposed in \cite{weinrib.halperin:83:critical}, known as the \textit{Weinrib--Halperin prediction}.

In this paper, we show that, under the appropriate conditions, the partition function of the disordered pinning model converges in distribution to the $L^1$-solution 
of a fractional stochastic heat equation (SHE) driven by colored noise, which confirms a disorder-relevant regime beyond the $L^2$-regime.

In the rest of this subsection, we first introduce the corresponding disordered pinning model and the fractional SHE in Section \ref{sec:model}. Then we introduce the research background and related studies in Section \ref{sec:review} and state our main result in Section \ref{sec:main_result}. Finally, we discuss our result and propose some open questions in Section \ref{sec:discuss}

\subsection{Models}\label{sec:model}
In this section, we introduce the disordered pinning model and the fractional SHE. We also heuristically illustrate the connection between the two models.

First, we introduce the disordered pinning model in correlated random environment.

Let $\tau:=\{\tau_0:=0<\tau_1<\tau_2<\cdots\}\subset\bbN\cup\{0\}$ be a renewal process with probability $\bP$ and expectation $\bE$, respectively. We assume that $\tau$ obeys the following inter-arrival law
\begin{equation}\label{def:inter_arr}
\bP(\tau_1=n)=\frac{L(n)}{n^{1+\alpha}},\quad\forall n\geq1,
\end{equation}
where $\alpha>0$ and $L(n):\bbR^+\to\bbR^+$ is a function slowly varying at the infinity (see \cite{BGT89} for more details of slowly varying functions).

Let $\omega:=\{\omega_n\}_{n\in\bbN}$ be a family of Gaussian random variables with probability $\bbP$ and expectation $\bbE$, respectively, representing the random environment. We assume $\omega$ is independent of $\tau$, {with
\begin{equation}\label{def:omega}
\bbE[\omega_n]=0\quad\text{and}\quad\gamma(n-m):=\cov_{\bbP}(\omega_n,\omega_m)\leq C\Big(|n-m|^{2H-2}\wedge 1\Big),\quad\forall n,m\in\bbN,
\end{equation}
for some positive constant $C$  and
\begin{equation}\label{eq:omega_asymp}
\lim\limits_{N\to\infty}N^{2-2H}\gamma(\lfloor tN\rfloor)=t^{2H-2},\quad\text{for any}~t>0,
\end{equation}
where} $H\in(0,1)$ is the \emph{Hurst parameter}. We shall discuss the assumption of Gaussianity in Section~\ref{sec:discuss}.

The disordered pinning model with length $N$ is defined via a Gibbs transform, by
\begin{equation}\label{def:pin}
\dd\bP_{N,\beta}^{\omega,h}(\tau):=\frac{1}{Z_{N,\beta}^{\omega,h}}e^{\sum_{n=1}^N(\beta\omega_n+h)\ind_{\{n\in\tau\}}}\dd\bP(\tau),
\end{equation}
where $\beta>0$ is the inverse temperature, $h\in\bbR$ is an external field, and
\begin{equation}\label{def:origin_pf}
Z_{N,\beta}^{\omega,h}:=\bE\Big[e^{\sum_{n=1}^N(\beta\omega_n+h)\ind_{\{n\in\tau\}}}\Big]
\end{equation}
is the partition function, which ensures that $\bP_{N,\beta}^{\omega,h}$ is a probability measure, called the \textit{polymer measure}.

In this paper, however, instead of the original partition function $Z_{N,\beta}^{\omega,h}$, we study a \textit{Wick-ordered} partition function, defined by
\begin{equation}\label{def:wick_pf}
\what{Z}_N^{\beta}:=\bE\Big[\exp\Big\{\sum\limits_{n=1}^N\beta\omega_n\ind_{\{n\in\tau\}}-\frac12\var_\bbP\Big(\sum\limits_{n=1}^N\beta\omega_n\ind_{\{n\in\tau\}}\Big)\Big\}\Big].
\end{equation}
The reason for this choice is that, since we are going to show that the scaling limit of the partition function converges to a mild Skorohod solution of a fractional SHE (see \eqref{def:she} below), it is natural to renormalize the partition function so that it becomes a Wick exponential. We also refer to \cite[Section 1.3]{LSWZ26} for a more detailed discussion.

Next, we introduce a fractional SHE on $\bbR$,  driven by a Gaussian noise $\dot W(t,x):=\delta_0(x)\xi(t)$ which is temporally correlated  and localized at the origin.  The SHE is defined by
\begin{equation}\label{def:she}
\begin{cases}
\displaystyle
\partial_t u(t,x)=-(-\Delta)^{\rho/2}u(t,x)+\beta u(t,x)\delta_0(x)\xi(t),\quad(t,x)\in\bbR^+\times\bbR,\\
u(0,x)\equiv1,
\end{cases}
\end{equation}
where $-(-\Delta)^{\rho/2}$ with $\rho\in(0,2]$ is the (fractional) Laplacian, $\delta_0(x)$ is the Dirac delta function at $x=0$, and $\xi$ is a Gaussian noise with covariance
\begin{equation}\label{def:cov_noise}
\bbE[\xi(t)\xi(s)]=|t-s|^{2H-2},\quad\forall t,s>0,
\end{equation}
where $H\in(0,1)$ is the same parameter as  in \eqref{def:omega}.

Note that here we slightly abuse the notation, denoting the probability and expectation for $\xi$ by $\bbP$ and $\bbE$, respectively, the same as those for the disorder $\omega$ in the pinning model. Moreover, $-(-\Delta)^{\rho/2}$ is the infinitesimal generator of the standard symmetric $\rho$-stable process $(X_t)_{t\geq0}$, and we denote the probability and expectation for $X_t$ by $\bP$ and $\bE$, respectively, the same as those for the renewal process $\tau$. In summary, $\bP$ and $\bE$ are for the underlying stochastic process of each model, and $\bbP$ and $\bbE$ are for the random environment of each model, and thus this abuse of the notation does not cause any ambiguity.

Now we illustrate the connection between the two models.

First, note that by the Feynman-Kac formula, the Skorohod solution of \eqref{def:she} is formally given by
\begin{equation}\label{def:feynman-kac-L2}
u(t,x)=\bE_{(t,x)}\Big[\exp\Big\{\beta\int_0^t\delta(X_s)\xi(t-s)\dd s-\frac{\beta^2}{2}\var_\bbP\Big(\int_0^t\delta(X_s)\xi(t-s)\dd s\Big)\Big\}\Big],
\end{equation}
where $\bE_{(t,x)}[\cdot]:=\bE[\cdot|X_t=x]$ denotes the expectation for the backward symmetric $\rho$-stable process $X$ on $[0,t]$ with terminal condition $X_t=x$.

On the other hand, if we replace $\ind_{\{n\in\tau\}}$ in \eqref{def:wick_pf} by $\ind_{\{S_n=0\}}$, where $S=(S_n)_{n\geq0}$ is in the domain of attraction of a $\rho$-stable law, then it is easy to observe that the Wick-ordered partition function~\eqref{def:wick_pf} is a natural discretization of \eqref{def:feynman-kac-L2}. To be more precise, let $\sigma$ be the renewal process generated by the return time to $0$ of $S$, then the inter-arrival law of $\sigma$ satisfies \eqref{def:inter_arr} with
\begin{equation}\label{eq:walk_renew}
\begin{cases}
\alpha=\frac{1}{\rho}-1\in[0,\infty), &\text{for}~\rho\in(0,1],\\
\alpha=1-\frac{1}{\rho}\in(0,\frac12], &\text{for}~\rho\in(1,2],
\end{cases}
\quad\text{(see \cite{DK11,Kes63}).}
\end{equation}

We emphasize that \eqref{def:feynman-kac-L2} is only a formal solution of \eqref{def:she}. We shall discuss more about the fractional SHE in Section \ref{sec:review} below.

\subsection{Disorder-relevance and the Weinrib--Halperin prediction}\label{sec:review}
As we mentioned at the beginning of Section \ref{sec:intro}, a particularly important problem in studying the disordered pinning model is to determine whether disorder is relevant. To be specific, in the \textit{disorder-relevant regime}, for any $\beta>0$, the behavior of $\tau$ under $\bP_{N,\beta}^{\omega,h=0}$ is essentially different from that under $\bP$. In contrast, in the \textit{disorder-irrelevant regime}, for small enough $\beta>0$, the features of the model under both probability measures $\bP_{N,\beta}^{\omega,h=0}$ and $\bP$ are comparable. In practice, since the polymer measure $\bP_{N,\beta}^{\omega,h}$ is quite sophisticated, we can alternatively study the change of some \textit{critical exponents} or \textit{critical points} to determine disorder-relevance.

For correlated disorder with $\cov_{\bbP}(\omega_n,\omega_m)=|n-m|^{-p}\wedge1$ for any $p>0$, \cite{weinrib.halperin:83:critical} predicted that disorder should be relevant if $\alpha>\frac12\min\{p,1\}$ and irrelevant if $\alpha<\frac12\min\{p,1\}$, where $\alpha$ is the exponent in \eqref{def:inter_arr}. In particular, if $p=1$, that is, $H=\frac12$ in \eqref{def:omega} and \eqref{def:cov_noise}, then since the fractional Brownian motion with Hurst parameter $H=\frac12$ is a standard Brownian motion, by convention, we refer this case to a model with i.i.d.\ disorder or white noise, instead of assuming $\gamma(n-m)\sim|n-m|^{-1}$. For the disordered pinning model in i.i.d.\ random environment, the Harris criterion \cite{Harris74} conjectured that disorder is relevant if $\alpha>\frac12$ and is irrelevant if $\alpha<\frac12$, which coincides with the Weinrib--Halperin prediction. We mention that the model in i.i.d. random environment has been intensively studied in recent years, including the marginal case $\alpha=\frac12$, which is inconclusive by the Harris criterion. A list of references includes, but is not limited to, e.g., \cite{Lac10,GT06,GTL10,BL18,CTT17,caravenna.sun.ea:17:polynomial,caravenna.sun.ea:17:universality}.

While the Harris criterion has been confirmed by a series of research, the study of Weinrib--Halperin prediction remains largely open. For the disordered pinning model in correlated Gaussian environments with $p>1$, \cite{Ber13} confirmed that disorder is relevant when $\alpha>\frac12$ by observing a change of critical exponents of the free energy, verifying the sufficient condition for disorder-relevance when the Weinrib--Halperin prediction coincides with the Harris criterion. However, when $p<1$, \cite{Ber13} showed that the quenched free energy is always positive and thus, there is no critical exponent. Hence, the classical approach, that is, by comparing the critical exponents to determine whether disorder is relevant, is no longer applicable. This phenomenon in the case $p<1$ was also studied in \cite{Ber14}, where it is called \textit{infinite disorder}. It was also observed in \cite{Ber13} that whether the covariances of the random environment are summable plays a crucial role. Later, \cite{BP15} extended the results for $p>1$ in \cite{Ber13}, only assuming the covariances are summable, and \cite{Poi13} obtained sharp aysmptotics for the annealed free energy by assuming the covariances are doubly summable of exponentially decayed. Very recently, by only assuming that the covariances are summable, \cite{GLZ26} showed that in the \textit{localized regime}, the quenched free energy is $C^\infty$ with respect to the external field $h$ by providing bounds for each order of the derivatives, and the number of contacts between the renewal processes and disorder satisfies a central limit theorem, where the method was first developed for the model of i.i.d.\ random environment in \cite{GZ24}. Specifically, if the random environments have only finite-range correlations, then it was shown in \cite{Poi12,Poi13+} that the Harris criterion still applies.


Note that the case $p>1$ covers the case $H\in(0,\frac12)$, while the case $p<1$ is equivalent to $H\in(\frac12,1)$. In the latter case, the Weinrib--Halperin prediction claims that disorder is relevant if $\alpha+H>1$ and is irrelevant if $\alpha+H<1$. As we mentioned above, when $H\in(\frac12,1)$, instead of classical approaches such as comparing the critical exponents, a new perspective is needed to determine whether disorder is relevant. In the disorder-relevant regime, it is possible to tune down the strength of disorder to zero along with time (i.e., send $\beta=\beta_N\to0$ as $N\to\infty$) in an appropriate rate, so that the partition function converges to some non-trivial random limit, in which disorder still takes effect. This approach, called the \textit{intermediate disorder regime}, was first developed for the directed polymer model in i.i.d.\ random environment in \cite{alberts.khanin.ea:14:intermediate}, and then it was extended to various disordered systems (including the disordered pinning model) in i.i.d.\ random environments in \cite{caravenna.sun.ea:17:polynomial}. In the intermediate disorder regime, the rescaled partition function converges in distribution to an $L^2$-integrable random variable, and in particular, it coincides with the disorder-relevant regime for i.i.d.\ random environments, even for the marginal case $\alpha=\frac12$ of the Harris criterion (see \cite{caravenna.sun.ea:17:universality}).

The intermediate disorder regime has been shown to be a robust framework. It has been applied to directed polymers in various correlated random environments, e.g., \cite{Rang20,RSW24,CG23,SSSX21}. Very recently, this approach was also applied to study the disordered pinning model with $H\in(\frac12,1)$ in \cite{LSWZ26}. It was shown that by choosing $\beta_N=\hat{\beta}L(N)N^{-(\alpha+H-1)}$, the rescaled partition function \eqref{def:wick_pf} converges in distribution to an $L^2$-integrable random variable, which can be expressed by a chaos expansion, when $\alpha>\frac12$ for any $\hat{\beta}>0$, and when $\alpha=\frac12$ for sufficiently small $\hat{\beta}>0$. Since in both cases, $\alpha+H>1$ holds automatically, this partially confirmed the disorder-relevant regime in the Weinrib--Halperin prediction. Moreover, it was also shown that each chaos in the above $L^2$-limit is well-defined when $\alpha+H>1$. However, when $\alpha<\frac12$, the $L^2$-norm of the Wick-ordered partition function diverges as $N\to\infty$, providing an evidence that the intermediate disorder regime cannot be applied in the entire disorder-relevant regime. This phenomena is new, compared to the models in i.i.d.\ random environments, where the $L^2$-intermediate disorder regime coincides with the disorder-relevant regime. It was also shown in \cite{LSWZ26} that the Wick-ordered partition function is uniformly integrable when $\alpha+2H>2$, without the assumption $\alpha\geq\frac12$. However, the limit of the Wick-ordered partition function was not identified because of the absence of certain useful structures, such as a martingale structure. Moreover, even if the existence of the limit could be established, it cannot be $L^2$-integrable when $\alpha<\frac12$.


Very recently, the seminal work \cite{QRV25} developed a theory of $L^1$-integrable Skorohod integral, and applied it to construct a global $L^1$-solution of the parabolic Anderson model on $\bbR^2$, which was known to have only a local $L^2$-solution (\cite{Hu02}). This approach provides a natural martingale structure, and inspired by it, \cite{li.song.ea:26:on} studied the fractional SHE \eqref{def:she} with $H\in(\frac12,1)$, which is a continuum counterpart of the disordered pinning model in \cite{LSWZ26}. Note that when $\rho=2$ (i.e., $\alpha=\frac12$ by \eqref{eq:walk_renew}) and $H=\frac12$, the noise $\xi$ is white in time and thus the SHE is a marginal case by Harris criterion. This special case was first studied in \cite{WY25}.


The mild solution of \eqref{def:she} is formally given by
\begin{equation}\label{def:mild}
\begin{split}
u(t,x)&=\int_\bbR g_\rho(t,x-y)u(0,y)\dd y+\beta\int_0^t\int_\bbR g_\rho(t-s,x-y)u(s,y)\delta_0(y)\dd y\xi(s)\dd s\\
&=1+\beta\int_0^t g_\rho(t-s,x)u(s,0)\xi(s)\dd s, 
\end{split}
\end{equation}
where $g_\rho(t,x)$ is the transition density of the $\rho$-stable process $X$ and the stochastic integral is in the sense of Skorohod. It was shown in \cite{li.song.ea:26:on} that when $\rho\in(0,2)$, if the solution of \eqref{def:mild} is unique, then it cannot be $L^p$-integrable for any $p>1$. In particular, the stochastic integral therein is not well-defined in the classical $L^2$-case. By the theory in \cite{QRV25}, the stochastic integral can be defined as an $L^1$-integrable random variable, and a global $L^1$-solution of \eqref{def:mild} was constructed explicitly in \cite{li.song.ea:26:on} when $\rho\in(\frac{1}{2H-1},2]$. Note that $\rho>1$ by $H\in(\frac12,1)$, and thus the corresponding disordered pinning model has the exponent $\alpha=1-\frac{1}{\rho}\in(0,\frac12]$. Hence, $\rho\in(\frac{1}{2H-1},2]\Longleftrightarrow\alpha+2H>2$ and $\alpha\leq\frac12$, which is consistent with the result in \cite{LSWZ26}. In particular, this result confirms the disorder-relevance regime for $\alpha+2H>2$ with $\alpha\leq\frac12$, which partially fills the gaps in the Weinrib--Halperin prediction.

In summary, we have the following up-to-date phase diagram \ref{Fig:phase-plane} for the disordered pinning model in correlated Gaussian environments. We shall further discuss the diagram in Section \ref{sec:discuss}.
\begin{figure}[ht]
\centering
\begin{tikzpicture}[x=1cm,y=1cm,font=\small]
\fill[blue!16]   (0,6) -- (4,6) -- (4,4.5) -- cycle;
\fill[orange!22] (0,6) -- (4,3) -- (4,4.5) -- cycle;
\fill[green!14]  (4,3) -- (8,3) -- (8,6) -- (4,6) -- cycle;
\fill[black!8]   (0,3) -- (4,3) -- (0,6) -- cycle;
\fill[black!5]   (0,0) -- (4,0) -- (4,3) -- (0,3) -- cycle; 
\fill[yellow!15]   (4,0) -- (8,0) -- (8,3) -- (4,3) -- cycle; 

\draw[densely dotted,black!45]
(0,3) -- (8,3);

\draw[->,thick] (-0.1,0) -- (8.7,0) node[right]{$\alpha$};
\draw[->,thick] (0,-0.1) -- (0,6.7) node[above]{$H$};
\draw (4,-0.08)--(4,0.08);   \node[below] at (4,-0.12){$\tfrac12$};
\draw (8,-0.08)--(8,0.08);   \node[below] at (8,-0.12){$1$};
\node[below] at (0,-0.12){$0$};
\draw (-0.08,3)--(0.08,3);   \node[left] at (-0.10,3){$\tfrac12$};
\draw (-0.08,4.5)--(0.08,4.5);\node[left] at (-0.10,4.5){$\tfrac34$};
\draw (-0.08,6)--(0.08,6);   \node[left] at (-0.10,6){$1$};

\draw[very thick,black!75,densely dashed] (0,3) -- (8,3);
\node[black!75,fill=white,inner sep=1pt,font=\footnotesize]
at (6.55,3) {$H=\tfrac12$: Harris criterion};

\draw[very thick] (0,6) -- (4,3);
\node[rotate=-38,anchor=south,font=\footnotesize] at (1.55,4.25) {$\alpha+H=1$};
\draw[very thick,densely dashdotted] (0,6) -- (4,4.5);
\node[rotate=-20,anchor=west,font=\footnotesize] at (1.8,5.5) {$\alpha+2H=2$};
\draw[very thick] (4,0) -- (4,3);
\draw[thick,densely dotted] (4,3) -- (4,6);

\fill (4,3) circle (2.4pt);
\node[anchor=north west,font=\footnotesize] at (4.08,2.95)
{$(\tfrac12,\tfrac12)$};

\node[align=center,font=\footnotesize] at (2.2,5.8)
{$L^1$-regime(solved)};
\node[align=center,font=\footnotesize] at (6.0,4.6)
{$L^2$-regime\\(solved)};
\node[rotate=-30,anchor=south,font=\footnotesize] at (2.2,4.4)
{relevant (open)};
\node[align=center,font=\footnotesize] at (1.0,3.6) {irrelevant\\(open)};
\node[align=center,font=\footnotesize] at (2.0,1.4)
{irrelevant\\ (open)};
\node[align=center,font=\footnotesize] at (6.0,1.4)
{relevant\\ (solved)};
\end{tikzpicture}
\caption{This is the phase diagram on the $(\alpha,H)$ plane. In \cite{Ber13}, it was shown that disorder is relevant in the yellow regime by observing a change of critical exponents, and the quenched free energy is always positive in the green regime. In \cite{LSWZ26}, it was shown that the green regime is an $L^2$-regime, and thus it is a disorder-relevant regime. In \cite{li.song.ea:26:on}, it was shown that the purple regime is an $L^1$-regime via the fractional SHE \eqref{def:she}, and thus it is also a disorder-relevant regime. By the Weinrib--Halperin prediction, the orange regime should be a disorder-relevant regime, and the two grey regions should be disorder-irrelevant regimes, but these are still open.}
\label{Fig:phase-plane}
\end{figure}

\subsection{Main result}\label{sec:main_result}
In Section \ref{sec:model}, we have explained that the Wick-ordered partition function \eqref{def:wick_pf} and the Feynman-Kac formula \eqref{def:feynman-kac-L2} are comparable. We have also seen that the same condition $\alpha+2H>2$ appeared in both \cite{LSWZ26} and \cite{li.song.ea:26:on}. Hence, it is natural to conjecture that by tuning $\beta_N$ appropriately, the Wick-ordered partition function converges to the $L^1$-solution of the fractional SHE~\eqref{def:she} when $\rho\in(\frac{1}{2H-1},2]\Longleftrightarrow\alpha+2H>2$ and $\alpha\leq\frac12$. By taking advantage of the explicit expression for the $L^1$-solution, which is given in Section \ref{sec:pre}, we provide an affirmative answer to this conjecture. The following theorem is our main result.


\begin{theorem}\label{thm:main}
Assume $H\in(\frac12,1)$ and $\rho\in(\frac{1}{2H-1},2]\Longleftrightarrow\alpha+2H>2$ and $\alpha\in(0,\frac12]$. Take
\begin{equation}\label{eq:scale_beta}
\beta_N:=\hat{\beta}\frac{L(N)}{N^{\alpha+H-1}}.
\end{equation}
Let $\what{Z}_{\lfloor Nt\rfloor}^{\beta_N}$ be defined as in \eqref{def:wick_pf}. Then as $N\to\infty$, 
\begin{equation}\label{eq:main_conv}
\what{Z}_{\lfloor Nt\rfloor}^{\beta_N}\overset{\text{(d)}}{\longrightarrow}u^{( \kappa_\alpha\hat{\beta})}(t,0),
\end{equation}
where $u^{(\kappa_\alpha\hat{\beta})}(t,x)$ is the $L^1$-solution of \eqref{def:mild} with $\beta$ replaced by $\kappa_\alpha\hat{\beta}$, and $\kappa_\alpha:=C_\alpha/g_{\rho}(1,0)$ with $C_\alpha$ the constant arising from the local limit theorem (see \cite{Don97})
\begin{equation}\label{eq:local_limit}
C_\alpha:=\lim\limits_{n\to\infty}n^{1-\alpha}L(n)\bP(n\in\tau)=\frac{\alpha\sin(\pi\alpha)}{\pi},
\end{equation}
and $g_\rho(t,x)$ the transition density of the symmetric stable process $X$.
\end{theorem}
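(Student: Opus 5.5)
We prove \eqref{eq:main_conv} by truncating both sides at a level where $L^2$ methods work, and then removing the truncation by a uniform-in-$N$ $L^1$ estimate. The explicit form of the $L^1$-solution in \cite{li.song.ea:26:on} is essentially a Wick exponential along a renewal-type (stable-zero-set) structure, built from $L^2$ pieces that are then summed in $L^1$. Concretely, we expect $u(t,0)$ to be representable as $\lim_{\varepsilon\to0}u_\varepsilon(t,0)$ in $L^1(\bbP)$, where $u_\varepsilon$ is an $L^2$-object. One natural choice is to restrict the Feynman--Kac functional \eqref{def:feynman-kac-L2} to paths whose zero set avoids some bad event $A_\varepsilon$, for example that two consecutive visits to $0$ are closer than $\varepsilon$, or that the local time exceeds $1/\varepsilon$. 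Another is to mollify the noise. The same truncation will be applied to the discrete partition function:
\[
\what Z^{\beta_N,\varepsilon}_{\lfloor Nt\rfloor}:=\bE\Big[\ind_{A^N_\varepsilon{}^c}\, e^{\sum_n\beta_N\omega_n\ind_{\{n\in\tau\}}-\frac12\var_\bbP(\cdot)}\Big],
\]
with $A^N_\varepsilon$ the rescaled discrete analogue of $A_\varepsilon$.

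\textbf{Step 1 (truncated convergence).} For fixed $\varepsilon$, the truncated partition function is $L^2$-bounded uniformly in $N$. Its second moment is
\[
\bE^{\otimes2}\Big[\ind\ind\, e^{\beta_N^2\sum_{m,n}\gamma(n-m)\ind_{\{n\in\tau,\,m\in\tau'\}}}\Big],
\]
and the truncation kills the divergent short-range/high-local-time contributions responsible for $L^2$ blow-up when $\alpha<\frac12$. We then expand in Wiener chaos. Using \eqref{eq:omega_asymp}, \eqref{eq:scale_beta} and the renewal local limit theorem \eqref{eq:local_limit}, each chaos converges to the corresponding chaos of $u_\varepsilon$. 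The factor $C_\alpha$ appears from $\bP(n\in\tau)\approx C_\alpha n^{\alpha-1}/L(n)$. The factor $1/g_\rho(1,0)$ appears from matching with $g_\rho(s,0)=g_\rho(1,0)s^{-1/\rho}$, since $-1/\rho=\alpha-1$; this gives the effective coupling $\kappa_\alpha\hat\beta$. The tails of the chaos expansion are controlled uniformly in $N$ by the $L^2$ bound, following the chaos-by-chaos convergence framework of \cite{LSWZ26} and \cite{caravenna.sun.ea:17:polynomial}. This yields $\what Z^{\beta_N,\varepsilon}_{\lfloor Nt\rfloor}\Rightarrow u_\varepsilon(t,0)$.

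\textbf{Step 2 (removing the truncation).} Positivity of the integrand gives
\[
\bbE\big|\what Z_{\lfloor Nt\rfloor}^{\beta_N}-\what Z^{\beta_N,\varepsilon}_{\lfloor Nt\rfloor}\big|=\bbE\,\bE\big[\ind_{A_\varepsilon^N}\,(\cdots)\big]=\bP(A^N_\varepsilon),
\]
because the Wick exponential has $\bbP$-mean $1$. This bound is uniform in $N$, and $\bP(A^N_\varepsilon)\to\bP(A_\varepsilon)\to0$ as $\varepsilon\to0$ by renewal convergence to the $\alpha$-stable regenerative set. The same identity in the continuum gives $\|u-u_\varepsilon\|_{L^1}\le\bP(A_\varepsilon)$, and a standard approximation argument for convergence in distribution then concludes the proof.

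\textbf{Main obstacle.} The main difficulty is choosing the truncation event so that three things hold simultaneously: the truncated object has a uniform $L^2$ bound precisely when $\alpha+2H>2$ (a Hölder/renewal-integral estimate on $\gamma$-weighted intersection local times), its probability vanishes as $\varepsilon\to0$, and it matches the construction of the $L^1$-solution in \cite{li.song.ea:26:on}. The most delicate part is showing that the pathwise (Feynman--Kac) truncation in the continuum coincides with their Skorohod $L^1$-solution. We would first try a truncation on the collision local time $\iint|s-r|^{2H-2}\,\dd L_s\,\dd L'_r$ restricted to windows of size $\varepsilon$. The condition $\alpha+2H>2$ is exactly what makes the self-interaction $\iint|s-r|^{2H-2}\,\dd L_s\,\dd L_r$ of a single $\alpha$-stable regenerative set finite.
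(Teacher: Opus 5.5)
Your Step 2 works and is cleaner than the paper's \eqref{eq:term1}. With the indicator outside the Wick exponential, positivity and $\bbE$-mean one give $\bbE|\what Z^{\beta_N}_{\lfloor Nt\rfloor}-\what Z^{\beta_N,\varepsilon}_{\lfloor Nt\rfloor}|=\bP(A^N_\varepsilon)$ exactly. In the continuum, \eqref{eq:feynman_kac_u}, \eqref{eq:parseval_local_time} and Fubini give $\bbE|u-u_\varepsilon|=\bP(A_\varepsilon)$, so the identification you call most delicate is not where the difficulty lies.

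The truncation itself, however, is not pinned down, and none of your candidates works as stated:
\begin{itemize}
\item ``Two consecutive visits closer than $\varepsilon$'' has probability tending to one (already $\bP(\tau_1\le\varepsilon N)\to1$, and the stable zero set is perfect), so Step 2 fails for it.
\item A cutoff on $\iint|s-r|^{2H-2}\dd L_s\dd L'_r$ involves two replicas, so it cannot be inserted into $\what Z$, which averages over a single path.
\item A local-time cutoff does not by itself bound the mutual energy pathwise, because the kernel is singular.
\end{itemize}
What works is the single-path self-energy cutoff \eqref{eq:good_set}. Cauchy--Schwarz gives $|\cov_\bbP(Y_N,Y'_N)|\le K$ on the product of good sets. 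Hence the second moment is at most $e^K$, and the $k$-th chaos has squared norm at most $K^k/k!$ uniformly in $N$; a bare uniform $L^2$ bound would not control chaos tails uniformly. With this cutoff, $\alpha+2H>2$ enters through the tightness $\lim_{K\to\infty}\sup_N\bP((A_N^{(K)})^c)=0$ from \cite{LSWZ26}, not through the $L^2$ bound.

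The genuine gap is in Step 1. The $k$-th chaos kernel of the truncated partition function is $\frac{\beta_N^k}{k!}\bE\big[\ind_{A_N^{(K)}}\prod_{i=1}^k\ind_{\{n_i\in\tau\}}\big]$. The cutoff is a nonlocal, singular functional of the whole path, so \eqref{eq:local_limit} says nothing about the limit of this kernel. You would need a Palm-type invariance principle with three ingredients:
\begin{itemize}
\item given $n_1,\dots,n_k\in\tau$ (concatenated renewal bridges), the rescaled renewal set converges jointly with the discrete self-energy to the continuum self-energy $\iint|s-r|^{2H-2}\dd L_s\dd L_r$;
\item uniform integrability holds near the diagonal;
\item the limit law has no atom at $K$.
\end{itemize}
None of this is addressed, and it is the main analytic content of your route.

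The paper is built to avoid exactly this. It projects onto finitely many noise coordinates $\xi_{1,N},\dots,\xi_{n,N}$, so the projected partition function is an explicit function $F_N$ of a standard Gaussian vector. The path enters only through $\sL_Nb_N$, whose law converges to that of $\boldsymbol m$ by the moment method using only unrestricted multi-point functions; Scheff\'e's lemma then concludes. The good set is used only to bound the projection error in $L^2$. There the indicator is discarded after the Cauchy--Schwarz step, so only the unrestricted quantities $\bE[\var_\bbP(Y_N)]$ and $\bE[(\sL_Nb_N)^T\sL_Nb_N]$ need to converge. Your approach could presumably be completed, but only by proving the restricted kernel convergence that the finite-dimensional projection bypasses.
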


{
\begin{remark}
We also mention that for $\rho=2\Longleftrightarrow\alpha=\frac12$, the convergence \eqref{eq:main_conv} in Theorem \ref{thm:main} can be proved in the $L^2$-regime for sufficiently small $t>0$. This is an immediate consequence of \cite[Theorem 1.1]{LSWZ26} and \cite[Proposition 3.2]{li.song.ea:26:on}, where we observe that the scaling limit of the Wick-ordered partition function coincides with the chaos expansion of the local $L^2$-solution of the SHE \eqref{def:she} .
\end{remark}
}

\subsection{Further discussion}\label{sec:discuss}
In this section, we discuss the setting and the result of our model, and some open problems.

\textbf{(Gaussian environment)} In this paper, we assume that the random environment is Gaussian. However, in view of related studies of disorder systems in independent environments, e.g., \cite{alberts.khanin.ea:14:intermediate,caravenna.sun.ea:17:polynomial,caravenna.sun.ea:17:universality}, it seems that this assumption is not necessary. To be more specific, a weaker assumption $\bbE[\exp(\beta\omega_1)]<+\infty$ for some $\beta>0$ should be sufficient, without assuming any particular distribution of $\omega$. Our approach relies on the assumption of Gaussianity due to technical reasons. In particular, the computations in Section \ref{sec:proof_prop1}, if it still works, could be very sophisticated without Gaussianity. Similar situations were also encountered in other studies of the same model, e.g.,\cite{Ber13,Poi13,LSWZ26}, where the random environments were all assumed to be Gaussian. We conjecture that the assumption of Gaussianity could be removed, but a very delicate treatment should be needed.

\textbf{(The condition $\alpha+2H>2$)} According to the Weinrib--Halperin prediction, disorder should be relevant when $\alpha+H>1$. In this paper, our result depends on a stronger assumption $\alpha+2H>2$. The assumption arises since we apply the Cauchy-Schwarz inequality in \eqref{eq:on_good_set1} and \eqref{eq:on_good_set2}, where we decouple two independent renewal processes. The same issue arises in \cite{LSWZ26,li.song.ea:26:on}, and it was shown in \cite[Lemma 4.20]{li.song.ea:26:on} that the estimate obtained after applying the Cauchy--Schwarz inequality is optimal for establishing the finiteness of  the so-called \textit{self-energy}. Since the finiteness of the self-energy plays a critical role in proving uniform integrability therein, extending the disorder-relevant regime from $\alpha+2H>2$ to $\alpha+H>1$ requires a new approach to estimating the partition function that does not rely on the finiteness of the self-energy.

Note that in \cite{LSWZ26}, it was shown that for the chaos expansion of the Wick-ordered partition function, each order of chaos is well-defined when $\alpha+H>1$, although the chaos expansion converges in $L^2$ only for $\alpha\ge \frac12$. In addition, a renormalization argument in \cite{li.song.ea:26:on} also suggests $\alpha+H=1$ is the threshold between the disorder-relevant regime and disorder-irrelevant regime. Hence, we believe that  on the regime (the orange regime in the phase diagram \ref{Fig:phase-plane})
\begin{equation*}
\Big\{(\alpha,H):\alpha+H>1,\alpha+2H\leq2, \alpha\in\Big(0,\frac12\Big], H\in\Big(\frac12,1\Big)\Big\},
\end{equation*}
disorder is relevant, but it is still not clear whether it is an $L^1$-regime as $\alpha+2H>2$.

\textbf{(Disorder-irrelevant regime)} It is conjectured by the Weinrib--Halperin prediction that the gray regime in the phase diagram~\ref{Fig:phase-plane}
\begin{equation*}
\Big\{(\alpha,H): \alpha\in\Big(0,\frac12\Big], \alpha+H<1\Big\}
\end{equation*}
is disorder-irrelevant. The study of this regime remains largely open. A classical criterion to confirm disorder irrelevance is to show that for sufficiently small $\beta>0$, the critical exponent of the quenched free energy is the same as that of the free energy with $\beta=0$ (see \cite{Lac10}). However, in \cite{Ber13}, it was shown that when $H>\frac12$, there is no critical exponent for the quenched free energy. Hence, some new criterion for the disorder-irrelevant regime is needed when $H>\frac12$. When $H<\frac12$, \cite[Theorem~5]{Ber13} showed that, at least, there is no contradiction to disorder irrelevance. Nevertheless, we expect that the results in \cite{Lac10} also hold when $H<\frac12$.

\subsection{Organization}\label{sec:organ}
The rest of the paper is organized as follows. In Section~\ref{sec:pre}, we present some preliminary results, with particular emphasis on the fractional SHE \eqref{def:she}, and prove Theorem \ref{thm:main} assuming two crucial intermediate results, namely, Propositions \ref{prop:finite_conv} and \ref{prop:uni_error}. We then prove Propositions \ref{prop:finite_conv} and \ref{prop:uni_error} in Sections \ref{sec:proof_prop1} and \ref{sec:proof_prop2}, respectively.

\section{Preliminaries and  proof of Theorem~\ref{thm:main}}\label{sec:pre}
In this section, we first recall how to construct the $L^1$-solution to the fractional SHE \eqref{def:she} in the mild sense \eqref{def:mild}. Although the development of the whole $L^1$-theory in \cite{QRV25} is not an easy task, the expression of the $L^1$-solution that we need is much more elementary. We then prove Theorem~\ref{thm:main}. The proof relies on two crucial intermediate results, namely, Propositions \ref{prop:finite_conv} and \ref{prop:uni_error}, whose proofs are deferred to subsequent sections.

\subsection{Preliminaries}
In this section, we collect the notations and known facts that will be used throughout the paper. In particular, we provide an explicit construction of the $L^1$-solution of the fractional SHE \eqref{def:she}.

Suppose that the fractional SHE \eqref{def:she} lives on  $[0,T]\times\bbR$ for some fixed $T>0$. Let $\cH$ be the Hilbert space associated with the Gaussian noise $\xi$, which is the completion of smooth functions with compact support under the inner product
\begin{equation}\label{def:H_inner_prod}
\langle f,g\rangle_\cH:=\int_0^T\int_0^Tf(s)g(t)|t-s|^{2H-2}\dd s\dd t
\end{equation}
for some $H\in(\frac12,1)$, and we denote the norm on $\cH$ by $\|\cdot\|_\cH$. Suppose $\{e_k(t),t\in[0,T]\}_{k\geq1}$ is an orthonormal basis of $\cH$ consisting of bounded continuous functions. Then, for any $f\in\cH$, we have the orthogonal expansion 
$f=\sum_{k=1}^\infty\langle f,e_k\rangle_\cH e_k$.  

In \cite{li.song.ea:26:on}, the $L^1$-solution of \eqref{def:mild} was constructed as follows.
\begin{enumerate}
\item The equation \eqref{def:mild} was first projected onto the $n$-dimensional truncation of the noise $\xi$. To be precise, define
\begin{equation}\label{def:project_noise}
\xi_k:=\xi(e_k)=\int_0^Te_k(t)\xi(t)\dd t.
\end{equation}
It was shown that if $u$ solves \eqref{def:mild}, then $u_n:=\bbE[u|\boldsymbol{\xi}^{(n)}]$ solves the corresponding projected equation, where $\boldsymbol{\xi}^{(n)}:=(\xi_1,\cdots,\xi_n)$.
\item There is an explicit expression for $u_n$. Denote
\begin{equation}\label{eq:tilde_e}
\tilde{e}_k(t):=\int_0^Te_k(s)|t-s|^{2H-2}\dd s.
\end{equation}
Let $L_t$ be the local time process of $X_t$ at $x=0$, and define
\begin{equation}\label{def:m}
m_k(t)=m_k^{(\beta)}(t):=\beta\int_0^t\tilde{e}_k(s)\dd L_s.
\end{equation}
Then
\begin{equation}\label{def:u_n}
u_n(t,x):=\bE_{(t,x)}\Big[\exp\Big\{\sum\limits_{k=1}^n\Big(m_k(t)\xi_k-\frac12 m_k(t)^2\Big)\Big\}\Big],
\end{equation}
which is well-defined for $\rho\in(\frac{1}{H},2]\Longleftrightarrow\alpha\in(1-H,\frac12]$ (see \cite[Section 4]{li.song.ea:26:on}).

\item It is straightforward to see that $u_n(t,x)$ is a non-negative martingale. The final step is to show that $u_n$ (for fixed $(t,x)$) is uniformly integrable, which is the case when $\rho\in(\frac{1}{2H-1},2]\Longleftrightarrow\alpha+2H>2$ and $\alpha\in(0,\frac12]$. Then the $L^1$-solution admits a Feynman-Kac formula
\begin{equation}\label{eq:feynman_kac_u}
u(t,x):\xlongequal[\bbP\text{-a.s.}]{L^1}\lim\limits_{n\to\infty}u_n=\bE_{(t,x)}\Big[\exp\Big\{\sum\limits_{k=1}^\infty\Big(m_k(t)\xi_k-\frac12 m_k(t)^2\Big)\Big\}\Big].
\end{equation}
\end{enumerate}

For the full details of the $L^1$-Skorohod integral, we refer to \cite{QRV25,li.song.ea:26:on}.

\subsection{Proof of Theorem~\ref{thm:main}}\label{sec:strategy}
The proof of Theorem~\ref{thm:main} consists of the following three steps.

{\bf Step 1.} For fixed $n\geq1$, we approximate $\sigma(\xi_1,\dots,\xi_n)$ by a linear combination of $\omega_1,\cdots,{\omega_{\lfloor tN\rfloor}}$ through $e_1,\cdots,e_n$, denoted by {$\xi_{1,N},\cdots,\xi_{n,N}$}, as $N\to\infty$. Then by projecting $\what{Z}_{\lfloor Nt\rfloor}^{\beta_N}$ onto {$\sigma(\xi_{1,N},\cdots,\xi_{n,N})$}, we show that the projected partition function converges in distribution to $u_n(t,0)$. More precisely, we set 
\begin{align}
\label{def:xi_kN}\xi_{k,N}&:=N^{-H}\sum\limits_{j=1}^{{\lfloor Nt\rfloor}} I_{j,N}(e_k)\omega_j,\quad\text{for}~1\leq k\leq n,
\end{align}
where
\begin{align}
\label{def:I_jN}I_{j,N}(f)&:=N\int_{\frac{j-1}{N}}^\frac{j}{N}f(t-s)\dd s,\quad\text{for}~1\leq j\leq Nt,
\end{align}
where we apply a time reversal for $I_{j,N}$ since $u_n(t,0)$ is conditioned at the terminal.
Then we define
\begin{equation}\label{def:project_Z}
\what{Z}_{N,n}(t):=\bbE\Big[\what{Z}_{\lfloor Nt\rfloor}^{\beta_N}\Big|\xi_{1,N},\cdots,\xi_{n,N}\Big]. 
\end{equation}

\begin{remark}\label{extension-f}
Note that not every element of $\cH$ can be identified with a measurable function; in fact, $\cH$ may contain distributions (see~\cite{pt00}). We remark that the integral in \eqref{def:I_jN} can be extended to all $f\in\cH$ through continuous extension. More precisely, we first define $I_{j,N}$ on step functions $f$ by~\eqref{def:I_jN}. For such $f$, we can prove there exists a positive constant $C_{H,N}$ such that 
\[
|I_{j,N}(f)|
\leq C_{H,N}\|f\|_{\cH}.
\]
Indeed,  by a change of variables, Parseval's identity, the
Cauchy--Schwarz inequality, and the Fourier representation of the
$\cH$-norm,
\begin{align*}
|I_{j,N}(f)|^2
&=
C N^2
\left|
\int_{\bbR}
\widehat f(\xi)
\overline{
{\widehat\ind_{[t-j/N,\,t-(j-1)/N]}}(\xi)
}
\dd\xi
\right|^2\\
&=
C N^2
\left|
\int_{\bbR}
\widehat f(\xi)|\xi|^{\frac{1-2H}{2}}
\overline{
{\widehat\ind_{[t-j/N,\,t-(j-1)/N]}}(\xi)
}
|\xi|^{\frac{2H-1}{2}}
\dd\xi
\right|^2\\
&\leq
C N^2
\left(
\int_{\bbR}
|\widehat f(\xi)|^2|\xi|^{1-2H}\dd\xi
\right)
\left(
\int_{\bbR}
\left|
{\widehat\ind_{[t-j/N,\,t-(j-1)/N]}}(\xi)
\right|^2
|\xi|^{2H-1}\dd\xi
\right)\\
&\leq
C_H N^2\|f\|_{\cH}^2
\int_{\bbR}
\left|
{\widehat\ind_{[t-j/N,\,t-(j-1)/N]}}(\xi)
\right|^2
|\xi|^{2H-1}\dd\xi.
\end{align*}
Since the interval $[t-j/N,t-(j-1)/N]$ has length $1/N$, we have
\[
\left|
{\widehat\ind_{[t-j/N,\,t-(j-1)/N]}}(\xi)
\right|
\leq
C\min\left\{\frac1N,\frac1{|\xi|}\right\}.
\]
Therefore, since $2H-1>-1$ and $2H-3<-1$,
\begin{align*}
\int_{\bbR}
\left|
{\widehat\ind_{[t-j/N,\,t-(j-1)/N]}}(\xi)
\right|^2
|\xi|^{2H-1}\dd\xi
&\leq
C\int_{|\xi|\leq N}\frac{1}{N^2}|\xi|^{2H-1}\dd\xi
+
C\int_{|\xi|>N}|\xi|^{2H-3}\dd\xi\leq C_H N^{2H-2},
\end{align*}
 Hence,
\[
|I_{j,N}(f)|
\leq C_{H,N} \|f\|_{\cH}.
\]
Thus, $I_{j,N}$ is continuous with respect to the $\cH$-norm on the
space of step functions. Since step functions are dense in $\cH$,
$I_{j,N}$ admits a unique continuous extension to $\cH$, which we still
denote by $I_{j,N}$. Accordingly, for a general $f\in\cH$,
\eqref{def:I_jN} is understood in this extended sense.
\end{remark}

The first key ingredient in the proof is the  following proposition, of which the proof is deferred to Section~\ref{sec:proof_prop1}.
\begin{proposition}\label{prop:finite_conv}
Let {$\rho\in(\frac1H,2]\Longleftrightarrow\alpha\in(1-H,\frac12]$} and $H\in(\frac12,1)$, and $\beta_N$ be given by \eqref{eq:scale_beta}. Then for any fixed $t>0$ and $n\geq1$, as $N\to\infty$, $\what{Z}_{N,n}(t)\overset{\text{(d)}}{\longrightarrow}u_n^{({\kappa_\alpha}\hat{\beta})}(t,0)$.
\end{proposition}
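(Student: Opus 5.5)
The projection can be computed explicitly because everything is jointly Gaussian. Write $\Omega_N(\tau):=\beta_N\sum_{j\le \lfloor Nt\rfloor}\omega_j\ind_{\{j\in\tau\}}$. Conditionally on $\tau$, the pair $(\Omega_N(\tau),\boldsymbol\xi^{(n)}_N)$ with $\boldsymbol\xi^{(n)}_N:=(\xi_{1,N},\dots,\xi_{n,N})$ is a centered Gaussian vector. Let $\Sigma_N:=\big(\cov_\bbP(\xi_{k,N},\xi_{l,N})\big)_{k,l\le n}$ and
\[
a_{k,N}(\tau):=\cov_\bbP(\Omega_N(\tau),\xi_{k,N})=\beta_N N^{-H}\sum_{i,j\le\lfloor Nt\rfloor}\ind_{\{i\in\tau\}}\gamma(i-j)I_{j,N}(e_k).
\]
For a Wick exponential $e^{G-\frac12\var G}$, the standard Gaussian identity gives $\bbE[e^{G-\frac12\var G}\mid\boldsymbol\xi]=\exp\{\langle b,\boldsymbol\xi\rangle-\frac12 b^T\Sigma b\}$ with $b=\Sigma^{-1}\cov(G,\boldsymbol\xi)$. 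By Fubini (everything is nonnegative), this yields the exact formula
\[
\what Z_{N,n}(t)=\bE\Big[\exp\Big\{a_N(\tau)^T\Sigma_N^{-1}\boldsymbol\xi^{(n)}_N-\tfrac12 a_N(\tau)^T\Sigma_N^{-1}a_N(\tau)\Big\}\Big].
\]
The proof then reduces to two convergences plus an exchange of limit and $\bE$.

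\textbf{Step 1 (the noise side).} By \eqref{eq:omega_asymp}, Riemann-sum approximations and the $\cH$-continuity of $I_{j,N}$ from Remark \ref{extension-f}, I will show $\Sigma_N\to(\langle e_k,e_l\rangle_\cH)_{k,l}=I_n$. Since $\boldsymbol\xi^{(n)}_N$ is Gaussian, this gives $\boldsymbol\xi^{(n)}_N\overset{(d)}\to\boldsymbol\xi^{(n)}$ directly. The bounded-continuous $e_k$ can be handled first; the $\cH$-closure is not needed because $n$ is fixed.

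\textbf{Step 2 (the renewal side).} I will show that $a_{k,N}(\tau^{(N)})$, with $\tau^{(N)}:=\{i/N: i\in\tau\}$ and time reversed, converges in distribution jointly in $k\le n$ to $m_k^{(\kappa_\alpha\hat\beta)}(t)=\kappa_\alpha\hat\beta\int_0^t\tilde e_k(s)\,\dd L_s$ under the bridge law $\bP_{(t,0)}$. The inner sum $N^{-H}\sum_j\gamma(i-j)I_{j,N}(e_k)$ is about $N^{1-H}\cdot N^{2H-2}\cdot N\,\tilde e_k(i/N)=N^{H}\tilde e_k(i/N)$, and $\beta_N N^{H}=\hat\beta L(N)N^{1-\alpha}$. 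Hence
\[
a_{k,N}\approx\hat\beta\,\frac{L(N)}{N^{\alpha}}\sum_{i\in\tau,\, i\le Nt}\tilde e_k(i/N),
\]
which is the Riemann–Stieltjes integral of $\tilde e_k$ against the rescaled counting measure $\frac{L(N)}{N^\alpha}\#(\tau\cap[0,Ns])$. Note that $\tilde e_k$ is continuous because $2H-2>-1$. The rescaled counting measure converges (the Mittag-Leffler/inverse-subordinator limit) to $c\,L_s$. The constant is read off from the means: by \eqref{eq:local_limit}, $\frac{L(N)}{N^\alpha}\bE\#\le\frac{C_\alpha}{\alpha}s^\alpha$, while $\bE L_s=\int_0^s g_\rho(r,0)\dd r$ with $g_\rho(r,0)=g_\rho(1,0)r^{-1/\rho}$, which produces $\kappa_\alpha=C_\alpha/g_\rho(1,0)$. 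Two further points need care:
\begin{itemize}
\item The bridge conditioning $X_t=0$ must be matched. The plain partition function carries no constraint $\lfloor Nt\rfloor\in\tau$, so one must check which endpoint the time reversal in \eqref{def:I_jN} pins, and compare with the free-endpoint version of $u_n$. If the free endpoint is what is needed, the identification with $\bE_{(t,0)}$ follows from the paper's convention.
\item The approximation errors in the kernel sums must be controlled uniformly in $i$, using the bound $\gamma(n)\le C(|n|^{2H-2}\wedge1)$ from \eqref{def:omega}.
\end{itemize}

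\textbf{Step 3 (passing to the limit).} I will show $\bE[F(a_N(\tau))]\to\bE_{(t,0)}[F(m(t))]$ for $F(a)=\exp\{a^T\Sigma^{-1}x-\frac12a^T\Sigma^{-1}a\}$, locally uniformly in $x$. Combined with Step 1 and a continuous-mapping/Skorokhod argument, this gives the claim. Since $F\le \exp(\frac12x^T\Sigma^{-1}x)$ is bounded for fixed $x$, weak convergence of $a_N$ suffices, with no uniform integrability needed. This is why only the weaker condition $\alpha>1-H$ appears here, where it ensures $m_k$ is finite. I expect the main obstacle to be Step 2: the joint convergence of the weighted counting measure to the local time, together with the endpoint/bridge identification and the constant $\kappa_\alpha$.
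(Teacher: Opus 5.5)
Your proposal is correct in outline. Its skeleton is the paper's: the exact Gaussian conditioning formula for $\what Z_{N,n}$, the convergence $\Sigma_N\to I_n$, and the reduction to convergence in law of the covariance vector (your $a_N$, the paper's $b_N$). The key step, however, is done differently. The paper proves $\sL_Nb_N\Rightarrow\boldsymbol m$ by Cram\'{e}r--Wold and the method of moments. Mixed moments of $b_N$ are sums of multi-point renewal probabilities, which by \eqref{eq:local_limit} and Riemann sums converge to integrals against $\psi_m$. After $t\mapsto1-t$, $s\mapsto1-s$ these are exactly the local-time moments \eqref{eq:local_time_mixed_moment} from \cite[Lemma 4.2]{li.song.ea:26:on}. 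Determinacy then follows from $M_{2k}\le C^{2k}(2k)!\,\Gamma(\alpha)^{2k}/\Gamma(2k\alpha+1)$.

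You instead use an invariance principle. The rescaled counting measure of $\tau$ converges to $\kappa_\alpha\,\dd L$, since the Mittag-Leffler limit is the inverse $\alpha$-stable subordinator, i.e.\ the stable local time up to the constant you fix by first moments. You combine this with a uniform-in-$i$ approximation of the kernel sums by $\tilde e_k$ and continuous mapping. Your route handles all $k$ at once, needs no moment determinacy, and uses only the one-point asymptotic \eqref{eq:local_limit}. The price is importing the functional limit theorem for infinite-mean renewals and the identification of the inverse local time as a stable subordinator. You also need a uniform integrability bound to pass the means to the limit. The paper's route is self-contained given \cite[Lemma 4.2]{li.song.ea:26:on}. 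It also yields the moment convergence \eqref{eq:conv_mmt}, which is reused for \eqref{eq:LNbN_limit} in Section~4; your approach would need a separate second-moment uniform integrability argument there.

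In Step~3 the paper avoids your locally uniform convergence. Since $\sL_N\boldsymbol\xi_N$ is exactly standard Gaussian for every $N$, one has $\int F_N\,\dd\mu_n=1=\int F\,\dd\mu_n$. Scheff\'{e}'s lemma then upgrades pointwise convergence to $L^1(\mu_n)$, which directly controls Lipschitz test functions. Your version also works, but you must supply equicontinuity in $x$. For instance, writing $F=\exp\{-\frac12(a-x)^T\Sigma_N^{-1}(a-x)+\frac12x^T\Sigma_N^{-1}x\}$ shows that $|a|F$ is bounded uniformly in $a$ for $x$ in compacts.

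Two smaller corrections. First, $\bE_{(t,0)}$ is not a bridge. It is the backward process started from $0$ at time $t$ and free at time $0$. You can see this in \eqref{eq:local_time_mixed_moment}: the heat-kernel factors run up to $t_{\sigma(m+1)}:=1$, with no factor for $[0,t_{\sigma(1)}]$ and no division by $g_\rho(1,0)$. Under the time reversal in \eqref{def:I_jN}, $\tau_0=0$ matches $X_t=0$ and the unconstrained renewal endpoint matches the free value $X_0$. So your Mittag-Leffler limit gives exactly the required law, and the endpoint worry dissolves.

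Second, your scaling bookkeeping is off by a factor of $N$. The $N$ in $I_{j,N}$ is cancelled by the interval length $1/N$, so the inner sum is $\approx N^{H-1}\tilde e_k(t-i/N)$, not $N^{H}\tilde e_k$. Then $\beta_NN^{H-1}=\hat\beta L(N)N^{-\alpha}$, which is what your displayed formula correctly uses. Also, the rescaled means converge to $C_\alpha s^\alpha/\alpha$ rather than being bounded by it.
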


{\bf Step 2.} As $u_n(t,x)$ converges to $u(t,x)$ in $L^1$, it remains to show that $\what{Z}_{\lfloor Nt\rfloor}^{\beta_N}$ can be approximated by $\what{Z}_{N,n}(t)$ uniformly in $N$. Then we have the following proposition (see Section~\ref{sec:proof_prop2} for its proof).
\begin{proposition}\label{prop:uni_error}
Let $\alpha\in(0,\frac12], H\in(\frac12,1)$ and $\alpha+2H>2$, and $\beta_N$ be given by \eqref{eq:scale_beta}. We have that
\begin{equation*}
\lim\limits_{n\to\infty}\sup\limits_{N\geq1}\bbE\Big|\what{Z}_{\lfloor Nt\rfloor}^{\beta_N}-\what{Z}_{N,n}(t)\Big|=0.
\end{equation*}
\end{proposition}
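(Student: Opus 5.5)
Since $\what{Z}_{N,n}(t)$ is a conditional expectation of $\what{Z}_{\lfloor Nt\rfloor}^{\beta_N}$, we avoid working with $L^1$ differences directly and go through Gaussian conditioning. Write $\what{Z}:=\what{Z}_{\lfloor Nt\rfloor}^{\beta_N}=\bE[\Phi_N(\tau)]$, where $\Phi_N(\tau)$ is the Wick exponential of the Gaussian variable $X_N(\tau):=\beta_N\sum_{j\le Nt}\omega_j\ind_{\{j\in\tau\}}$. Conditioning a Wick exponential of a Gaussian on the Gaussian vector $\boldsymbol{\xi}^{(n)}_N=(\xi_{1,N},\dots,\xi_{n,N})$ gives another Wick exponential of the projection $P_nX_N(\tau)$. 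Hence
\[
\what{Z}_{N,n}(t)=\bE\big[\exp\{P_nX_N(\tau)-\tfrac12\var(P_nX_N(\tau))\}\big].
\]
The difference $\what{Z}-\what{Z}_{N,n}(t)$ therefore involves only the component $R_n X_N:=X_N-P_nX_N$, which is orthogonal to the span of $\xi_{1,N},\dots,\xi_{n,N}$. We identify that span, via the isometry $\omega\mapsto N^{-H}\sum_j I_{j,N}(\cdot)\omega_j$, with the span of $e_1,\dots,e_n$ in a discrete version $\cH_N$ of $\cH$. The "$\cH$-valued" element attached to $\tau$ is then roughly $f_\tau(s)=\beta_NN^{H}\sum_{j\in\tau}\ind_{((j-1)/N,j/N]}(t-s)$, and $P_n$ is the projection onto the first $n$ basis directions; the Gram matrix of the $\xi_{k,N}$ is close to the identity, and we correct for this.

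\textbf{Truncation by a good set.} Plain $L^2$ bounds fail because $\alpha<\frac12$. Following the uniform integrability argument of \cite{LSWZ26}, we introduce a good set $A_M$ of renewal trajectories, for instance those whose rescaled self-energy $\var(X_N(\tau))$, or a local-time-type functional $\beta_N^2\sum_{i,j\le Nt}\gamma(i-j)\ind_{\{i,j\in\tau\}}$, is at most $M$. We split
\[
\bbE|\what{Z}-\what{Z}_{N,n}|\le 2\bE[\ind_{A_M^c}]+\bbE\big|\bE[(\Phi_N-\bbE[\Phi_N|\boldsymbol{\xi}^{(n)}_N])\ind_{A_M}]\big|.
\]
The first term uses $\bbE\Phi_N=1$ together with conditional Jensen. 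It tends to $0$ as $M\to\infty$ uniformly in $N$, once we show that the self-energy is tight uniformly in $N$ under $\bP$, which is where $\alpha+2H>2$ enters (the finiteness of the self-energy in \cite{li.song.ea:26:on}). For the good-set term we bound $L^1$ by $L^2$. The second moment is a double renewal expectation $\bE^{\otimes 2}[\ind_{A_M}(\tau)\ind_{A_M}(\tau')(e^{\langle f_\tau,f_{\tau'}\rangle}-e^{\langle P_nf_\tau,P_nf_{\tau'}\rangle})]$. We then use $|e^a-e^b|\le |a-b|e^{|a|+|b|}$, Cauchy--Schwarz in $(\tau,\tau')$ to decouple the two renewals (again requiring $\alpha+2H>2$), and $\langle f_\tau,f_{\tau'}\rangle\le \|f_\tau\|\|f_{\tau'}\|\le M$ on the good set. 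This reduces the term to $e^{CM}\,\bE\big[\ind_{A_M}\|R_nf_\tau\|^2_{\cH_N}\big]^{1/2}$ times a bounded factor.

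\textbf{Main obstacle.} The hard step is showing $\sup_N\bE[\ind_{A_M}\|R_nf_\tau\|^2_{\cH_N}]\to0$ as $n\to\infty$. We plan to treat the rescaled renewal set $\tau/N$ as converging to the regenerative set (the range of an $\alpha$-stable subordinator), with $f_\tau$ converging in $\cH$ to $\kappa_\alpha\hat\beta\,\dd L$. Each $\|f_\tau\|^2_{\cH_N}$ is uniformly integrable on $A_M$, since it is bounded there. Tightness in $\cH$ then comes from a slightly better moment: $\bE[\|f_\tau\|^{2}_{\cH^{H'}}]$ for some $H'<H$ with $\alpha+2H'>2$ still holds, and the embedding $\cH^{H'}\hookrightarrow\cH$ (or $\cH$-tails of a smoothed version) is compact. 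From this, $\sup_\tau$-uniform smallness of $R_n$ on compact sets gives the claim. Letting first $n\to\infty$ and then $M\to\infty$ concludes.
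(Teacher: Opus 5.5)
Up to the last step your argument is the paper's. This covers the split with error $2\bP(A_M^c)$, which is slightly cleaner than the bound \eqref{eq:term1} the paper imports from \cite{LSWZ26}. It also covers the $L^2$ bound on the good set, the second moment $\bE^{\otimes 2}[\ind_{A_M}\ind_{A_M'}(e^{\langle f_\tau,f_{\tau'}\rangle}-e^{\langle P_nf_\tau,P_nf_{\tau'}\rangle})]$, and the reduction via the mean value theorem and Cauchy--Schwarz to the expected residual $\bE[\ind_{A_M}(\var_\bbP(Y_N)-|\sL_Nb_N|^2)]$.

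\textbf{The gap is in getting $\sup_{N\geq1}$ in your ``main obstacle''.} Your Dini/compactness argument controls the \emph{continuum} residual $\|R_nf\|_\cH$ on a compact subset of $\cH$. The residual you actually need comes from the $N$-dependent discrete projection (covariance $\gamma$, Gram matrix $C_N$ of $\xi_{1,N},\dots,\xi_{n,N}$). Identifying the two is only possible for $N\gg n$. Indeed, taking $t=1$, for fixed $N$ and $n>N$ the matrix $C_N$ is singular, because the $\xi_{k,N}$ are combinations of only $N$ variables $\omega_j$. So, as sketched, you get at best this: for $n\geq n(\varepsilon)$ the residual is $\leq\varepsilon$ once $N\geq N_0(n)$. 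The threshold may grow with $n$, and you have no information for $N<N_0(n)$. Two ingredients are missing.
\begin{enumerate}
\item Monotonicity. For each fixed $N$ the residual is nonincreasing in $n$ (nested subspaces, i.e.\ Pythagoras for the conditional expectations). Hence a single pair $(n_0,N_0)$ handles all $n\geq n_0$ and $N\geq N_0$.
\item Exactness for each small $N$. For each of the finitely many $N<N_0$, the residual must vanish for large $n$, i.e.\ $\sigma(\omega_1,\dots,\omega_N)\subset\sigma(\xi_{1,N},\dots,\xi_{n,N})$ eventually. This is not automatic. It needs the vectors $(I_{1,N}(e_k),\dots,I_{N,N}(e_k))$, $k\geq1$, to span $\bbR^N$. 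The paper deduces this from three facts: $T_N=(I_{1,N},\dots,I_{N,N})$ is continuous on $\cH$ (Remark~\ref{extension-f}, needed because $\cH$ contains distributions); $T_N$ is onto via step functions; and $\operatorname{span}\{e_k\}$ is dense in $\cH$.
\end{enumerate}

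Separately, your compactness machinery is much heavier than needed. Since the residual is nonnegative, you can drop $\ind_{A_M}$ and compute the limit exactly for fixed $n$.
\begin{itemize}
\item A Riemann sum gives $\bE[\var_\bbP(Y_N)]\to(\kappa_\alpha\hat\beta)^2\bE_{(1,0)}[\int_0^1\int_0^1|r-s|^{2H-2}\dd L_r\dd L_s]$, which is finite exactly when $\alpha+2H>2$.
\item The second-moment convergence \eqref{eq:conv_mmt}, already obtained for Proposition~\ref{prop:finite_conv}, gives $\bE[|\sL_Nb_N|^2]\to\sum_{k\leq n}\bE_{(1,0)}[m_k^2]$.
\item The Parseval identity $\sum_{k\geq1}m_k^2=(\kappa_\alpha\hat\beta)^2\int_0^1\int_0^1|r-s|^{2H-2}\dd L_r\dd L_s$ from \cite{li.song.ea:26:on} identifies the limit as the tail $\sum_{k>n}\bE_{(1,0)}[m_k^2]$, which tends to $0$.
\end{itemize}
This removes the need for tightness of $f_\tau$ in $\cH$ via $\cH^{H'}$-moments, for the compact embedding, and for a discrete-to-continuum comparison of $\var_\bbP(Y_N)$ and $b_N$ on $A_M$. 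Combined with ingredients 1 and 2 above, this is exactly the paper's proof.
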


{\bf Step 3.} Now we are ready to prove Theorem \ref{thm:main}.  Without loss of generality, we work on the case $t=1$ throughout the rest of the paper, and simply denote $\what{Z}_{N,n}(t)$ by $\what{Z}_{N,n}$, and $u_n^{(\kappa_\alpha\hat{\beta})}(t,0), u^{(\kappa_\alpha\hat{\beta})}(t,0)$ by $u_n, u$, respectively.

For any bounded  $1$-Lipschitz function $f:\bbR\to\bbR$, we have 
\begin{equation}\label{eq:prove_thm}
\begin{split}
&\bigg|\bbE\Big[f\Big(\what{Z}_{N}^{\beta_N}\Big)\Big]-\bbE[f(u)]\bigg|\leq\bbE\bigg|\what{Z}_{N}^{\beta_N}-\what{Z}_{N,n}\bigg|+\bigg|\bbE\Big[f\Big(\what{Z}_{N,n}\Big)\Big]-\bbE[f(u_n)]\bigg|+\bbE\big|u_n-u\big|.
\end{split}
\end{equation}
For any $\gep>0$, we can choose  $n=n_0$ sufficiently large, such that the first and the last terms on the right-hand side of \eqref{eq:prove_thm} are smaller than $\gep/3$ by Proposition \ref{prop:uni_error} and eq.~\eqref{eq:feynman_kac_u} respectively. For the second term, we can choose $N_0:=N(n_0)$, such that for $N\geq N_0$, it is also smaller than $\gep/3$ by Proposition~\ref{prop:finite_conv}. Therefore, \eqref{eq:prove_thm} is smaller than $\gep$ for all $N$  large enough, which completes the proof.


\section{Proof of Proposition \ref{prop:finite_conv}}\label{sec:proof_prop1}
In this section, we prove Proposition \ref{prop:finite_conv}. Throughout this section, the Gaussian computations are carried out under $\bbP$ for each fixed realization of the renewal process $\tau$. For simplicity, we suppress the dependence  on $\tau$ whenever no confusion can arise.

First, by a straightforward computation, we derive a more explicit expression for $\what{Z}_{N,n}$ defined in~\eqref{def:project_Z}. For each fixed $n$, denote $\boldsymbol{\xi}_N=\boldsymbol{\xi}^{(n)}_N:=(\xi_{1,N},\cdots,\xi_{n,N})^{T}$, recalling that $\xi_{k,N}$ is defined in~\eqref{def:xi_kN}. We can decompose the Hamiltonian $$Y_N:=\beta_N\sum_{k=1}^N\omega_k\ind_{\{k\in\tau\}}$$ as
\begin{equation}\label{eq:def_XN}
Y_N=a_N^{T}\boldsymbol{\xi}_N+R_N,
\end{equation}
where $a_N=a_N^{(n)}$ is a vector to be determined and $R_N=R_N^{(n)}$ is a  Gaussian random variable  independent of  $\boldsymbol{\xi}_N$. Then
\begin{equation}\label{eq:gaussian_decomp}
0=\cov_\bbP(\boldsymbol{\xi}_N,R_N)=\cov_\bbP(\boldsymbol{\xi}_N,Y_N)-\cov_\bbP(\boldsymbol{\xi}_N)a_N.
\end{equation}
Note that
\begin{equation}\label{eq:cov_xi}
\begin{split}
\cov_{\bbP}(\xi_{i,N},\xi_{j,N})&=N^{2-2H}\sum\limits_{m=1}^N\sum\limits_{\ell=1}^N\int_{\frac{m-1}{N}}^{\frac{m}{N}}\int_{\frac{\ell-1}{N}}^{\frac{\ell}{N}}e_i(1-r)e_j(1-s)\gamma(m-\ell)\dd s\dd r\\
&\xrightarrow{N\to\infty}\langle e_i,e_j\rangle_\cH=\begin{cases}
1,\quad\text{if}~i=j,\\[2pt]
0,\quad\text{if}~i\neq j,
\end{cases}
\end{split}
\end{equation}
by a Riemann sum approximation. Hence, for fixed $n$, $C_N=C_{N,n}:=\cov_\bbP(\boldsymbol{\xi}_N)$ is invertible for large enough $N$. Therefore, by \eqref{eq:gaussian_decomp},
\begin{equation}\label{eq:a_b}
a_N= C_N^{-1}b_N,\quad\text{with}~b_N:=\cov_\bbP(\boldsymbol{\xi}_N,Y_N).
\end{equation}

Then, noting that $R_N$ is independent of $\boldsymbol{\xi}_N$, we have  $Y_N|_{\boldsymbol{\xi}_N=\boldsymbol{z}}\overset{\mathrm{(d)}}{=}a_N^T\boldsymbol{z}+R_N$. Furthermore, since $\bbE[R_N]=\bbE[Y_N]-a_N^T\bbE[\boldsymbol{\xi}_N]=0$ we get
\begin{equation}
\begin{split}
\var_\bbP(R_N)&=\var_\bbP(Y_N)-2a_N^Tb_N+a_N^TC_Na_N\\
&=\var_\bbP(Y_N)-b_N^TC_N^{-1}b_N,
\end{split}
\end{equation}
where we have used the symmetry of $C_N$ in the last equality. Therefore,  the conditional distribution of $Y_N$ given $\boldsymbol{\xi}_N$ is 
\begin{equation}
Y_N|\boldsymbol{\xi}_N\sim\cN(a_N^T\boldsymbol{\xi}_N, \var_\bbP(Y_N)-b_N^TC_N^{-1}b_N). 
\end{equation}
Thus, we get
\begin{equation}\label{eq:cond_Z}
\begin{split}
\what{Z}_{N,n}&=\bbE\Big[\bE\Big[\exp\Big\{Y_N-\frac12\var_\bbP(Y_N)\Big\}\Big]\Big|\boldsymbol{\xi}_N\Big]=\bE\Big[\exp\Big\{-\frac12\var_\bbP(Y_N)\Big\}\bbE\Big[\exp(Y_N)\Big|\boldsymbol{\xi}_N\Big]\Big]\\
&=\bE\Big[\exp\Big\{-\frac12\var_\bbP(Y_N)\Big\}\exp\Big\{a_N^T\boldsymbol{\xi}_N+\frac12\Big(\var_\bbP(Y_N)-b_N^TC_N^{-1}b_N\Big)\Big\}\Big]\\
&=\bE\Big[\exp\Big\{a_N^T\boldsymbol{\xi}_N-\frac12 b_N^T C_N^{-1}b_N\Big\}\Big]=\bE\Big[\exp\Big\{(\sL_Nb_N)^T \sL_N\boldsymbol{\xi}_N-\frac12 (\sL_N b_N)^T \sL_Nb_N\Big\}\Big],
\end{split}
\end{equation}
where $C_N^{-1}=\sL_N^T\sL_N$ is the Cholesky decomposition, so that $\sL_N\boldsymbol{\xi}_N$ is an random vector with i.i.d.\ standard Gaussian components.

Recall from \eqref{def:u_n} that
\begin{equation*}
u_n=u_n^{(\kappa_\alpha\hat{\beta})}(1,0)=\bE_{(1,0)}\Big[\exp\Big\{\sum\limits_{k=1}^n\Big(m_k^{(\kappa_\alpha\hat{\beta})}(1)\xi_k-\frac12 m_k^{(\kappa_\alpha\hat{\beta})}(1)^2\Big)\Big\}\Big].
\end{equation*}

To show that
\begin{equation}\label{eq:Z_to_u}
\what{Z}_{N,n}\overset{\text{(d)}}{\longrightarrow}u_n,\quad\text{as}~ N\to\infty, 
\end{equation}
we claim that it suffices to show that
\begin{equation}\label{eq:Lb_to_m}
\sL_Nb_N\overset{\text{(d)}}{\longrightarrow} \boldsymbol m=(m_1,\cdots,m_n)^T,
\end{equation}
where $m_k:=m_k^{( \kappa_\alpha\hat{\beta})}(1)$ for $1\leq k\leq n$ is defined in \eqref{def:m}.

We postpone the proof for \eqref{eq:Lb_to_m} and apply it to prove \eqref{eq:Z_to_u}. Denote
\begin{align}
F_N(\boldsymbol{x})&:=\bE\Big[\exp\Big\{(\sL_Nb_N)^T\boldsymbol{x}-\frac12 (\sL_Nb_N)^T \sL_Nb_N\Big\}\Big],\\
F(\boldsymbol{x})&:=\bE_{(1,0)}\Big[\exp\Big\{\sum\limits_{k=1}^n\Big(m_kx_k-\frac12 m_k^2\Big)\Big\}\Big],
\end{align}
where $\boldsymbol{x}:=(x_1,\cdots,x_n)^T$.

For any fixed $n$ and $\boldsymbol{x}$, $\exp(\boldsymbol{z}^T\boldsymbol{x}-\frac12\boldsymbol{z}^T\boldsymbol{z})$ is a bounded continuous function in $\boldsymbol{z}$. Hence, by~\eqref{eq:Lb_to_m}, we have that $\lim_{N\to\infty}F_N(\boldsymbol{x})=F(\boldsymbol{x})$. Moreover, noting that both $\sL_N\boldsymbol{\xi}_N$ and $\boldsymbol{\xi}^{(n)}$ have the same Gaussian measure $\mu_n(\dd\boldsymbol{x}):=(2\pi)^{- n/2}\exp(-\frac12\boldsymbol{x}^T\boldsymbol{x})$, we get 
\begin{equation}
\int_{\bbR^n}F_N(\boldsymbol{x})\mu_n(\dd\boldsymbol{x})=\bbE[F_N(\sL_N\boldsymbol{\xi_N})]=\bbE[F(\boldsymbol{\xi}^{(n)})]=1.
\end{equation}
Then Scheff\'{e}'s lemma implies that
\begin{equation}\label{eq:scheffe}
\lim\limits_{N\to\infty}\int_{\bbR^n}\Big|F_N(\boldsymbol{x})-F(\boldsymbol{x})\Big|\mu_n(\dd\boldsymbol{x})=0.
\end{equation}
Finally, for any bounded and $1$-Lipschitz function $f:\bbR\to\bbR$, we have that
\begin{equation}
\begin{split}
&\Big|\bbE\big[f\big(F_N(\sL_N\boldsymbol{\xi}_N)\big)\big]-\bbE\big[f\big(F(\boldsymbol{\xi}^{(n)})\big)\big]\Big|\\
=&\bigg|\int_{\bbR^n}f\big(F_N(\boldsymbol{x})\big)\mu_n(\dd\boldsymbol{x})-\int_{\bbR^n}f\big(F(\boldsymbol{x})\big)\mu_n(\dd\boldsymbol{x})\bigg|\\
\leq&\int_{\bbR^n}\Big|f\big(F_N(\boldsymbol{x})\big)-f\big(F(\boldsymbol{x})\big)\Big|\mu_n(\dd\boldsymbol{x})\leq \int_{\bbR^n}\Big|F_N(\boldsymbol{x})-F(\boldsymbol{x})\Big|\mu_n(\dd\boldsymbol{x}),
\end{split}
\end{equation}
which tends to $0$ as $N\to\infty$ by \eqref{eq:scheffe}. This concludes \eqref{eq:Z_to_u}.

It only remains to prove \eqref{eq:Lb_to_m}. We first provide an explicit formula for $b_N=\cov_\bbP(Y_N,\boldsymbol{\xi}_N)$. We have that for any $1\leq j\leq n$,
\begin{equation}\label{eq:formula_b}
\begin{split}
b_{j,N}=\cov_\bbP(Y_N,\xi_{j,N})=\hat{\beta}\frac{L(N)}{N^{\alpha+2H-2}}\sum\limits_{\ell=1}^N\sum\limits_{i=1}^N\int_{\frac{i-1}{N}}^{\frac{i}{N}}\ind_{\{\ell\in\tau\}}\gamma(i-\ell)e_j(1-s)\dd s.
\end{split}
\end{equation}
By Cram\'{e}r-Wold theorem, to prove \eqref{eq:Lb_to_m}, it suffices to show that
\begin{equation}\label{eq:cramer_wold}
\boldsymbol{z}^T\sL_Nb_N\overset{(d)}{\longrightarrow}\sum\limits_{k=1}^n z_km_k, \quad\text{as}~N\to\infty,~\text{for any}~\boldsymbol{z}=(z_1,\cdots,z_n)\in\bbR^n. 
\end{equation}

We apply Hamburger's moment method to show \eqref{eq:cramer_wold} below. Note that for any integer $m\in\bbN$,
\begin{equation}\label{eq:mix_mmt}
\begin{split}
\lim\limits_{N\to\infty}\bE\Big[\big(\boldsymbol{z}^T\sL_N b_N\big)^m\Big]=\lim\limits_{N\to\infty}\sum\limits_{k_1=1}^n\cdots\sum\limits_{k_m=1}^n\Big(\prod\limits_{j=1}^m(\boldsymbol{z}^T\sL_N)_{k_j}\Big)\bE\Big[\prod\limits_{j=1}^m b_{k_j,N}\Big].
\end{split}
\end{equation}
Since $\lim_{N\to\infty}\sL_N=I_n$, we have $\lim_{N\to\infty}(\boldsymbol{z}^T \sL_N)=\boldsymbol{z}^T$. Thus, to obtain the convergence of  moments, it suffices to show
\begin{equation}\label{eq:conv_mmt}
\lim\limits_{N\to\infty}\bE\Big[\prod\limits_{j=1}^m b_{k_j,N}\Big]=\bE\Big[\prod\limits_{j=1}^m m_{k_j}\Big],
\end{equation}
for any $m\geq1$ and $1\leq k_1,\cdots,k_m\leq n$.

By \eqref{eq:formula_b}, we get
\begin{equation}\label{eq:mix_mmt2}
\begin{split}
\bE\Big[\prod\limits_{j=1}^m b_{k_j,N}\Big]=\hat{\beta}^m&\sum\limits_{\ell_1=1}^N\sum\limits_{i_1=1}^N\cdots\sum\limits_{\ell_m=1}^N\sum\limits_{i_m=1}^N\frac{L(N)^m}{N^{(\alpha-1)m}}\bP(\ell_1\in\tau,\cdots,\ell_m\in\tau)\frac{1}{N^m}\\
&\times\int_{\frac{i_1-1}{N}}^{\frac{i_1}{N}}\cdots\int_{\frac{i_m-1}{N}}^{\frac{i_m}{N}}\prod\limits_{j=1}^m\frac{1}{N^{2H-2}}\gamma(i_j-\ell_j)e_{k_j}(1-s_j)\dd s_j.
\end{split}
\end{equation}
Then by \eqref{eq:local_limit}, a Riemann sum approximation (see \cite[Page 2808--2809]{LSWZ26} for more details), as $N\to\infty$, \eqref{eq:mix_mmt2} converges to
\begin{equation}\label{eq:mix_mmt3}
\hat{\beta}^m\int_{[0,1]^m}\int_{[0,1]^m}\psi_m(t_1,\cdots,t_m)\prod\limits_{j=1}^m|t_j-s_j|^{2H-2}e_{k_j}(1-s_j)\dd s_j\dd t_j,
\end{equation}
where $\psi_m(t_1,\cdots,t_m)$ is $0$ if $t_i=t_j$ for some $i\neq j$, and is symmetric, in the sense that for any $(t_1,\cdots,t_m)$ with $t_i\neq t_j$ for  $i\neq j$, there exists a unique permutation $\sigma$ on $\{1,\cdots,m\}$, such that $0<t_{\sigma(1)}<\cdots<t_{\sigma(m)}<1$, and
\begin{equation}\label{def:psi}
\psi_m(t_1,\cdots,t_m)=\frac{C_\alpha^m}{t_{\sigma(1)}^{1-\alpha}\prod_{k=2}^m(t_{\sigma(k)}-t_{\sigma(k-1)})^{1-\alpha}}.
\end{equation}
Note that $\psi_m(t_1,\cdots,t_m)$ arises naturally due to the local limit theorem \eqref{eq:local_limit}. For any $(t_1,\cdots,t_m)\in[0,1]^m$ with $t_i\neq t_j$ for any $i\neq j$,
\begin{equation}
\psi_m(t_1,\cdots,t_m)=\lim\limits_{N\to\infty} \frac{L(N)^m}{N^{(\alpha-1)m}}\bP\big([t_1 N]\in\tau,\cdots,[t_mN]\in\tau\big).
\end{equation}



It is straightforward to check that by a change of variables $s_j'=1-s_j$ and $t_j'=1-t_j$ for all $1\leq j\leq m$ in \eqref{eq:mix_mmt3}, the $2m$-fold integral is equal to $\bbE[\prod_{j=1}^m m_{k_j}]$. To see this, denote by $S_m$ the set of  all permutations on $\{1,\cdots,m\}$, and then by \cite[Lemma~4.2]{li.song.ea:26:on}  with $t_{\sigma(m+1)}:=1$, we have 
\begin{equation}\label{eq:local_time_mixed_moment}
\begin{split}
&\bE_{(1,0)}\Big[\prod_{j=1}^m m_{k_j}\Big]=(\kappa_\alpha\hat\beta)^m
\bE_{(1,0)}\bigg[\prod_{j=1}^m\bigg(\int_0^1\tilde e_{k_j}(t_j)\dd L_{t_j}\bigg)\bigg]\\
=&(\kappa_\alpha\hat\beta)^m\sum_{\sigma\in S_m}\int_{0<t_{\sigma(1)}<\cdots<t_{\sigma(m)}<1}
\prod_{j=1}^m\tilde e_{k_j}(t_j)g_\rho(t_{\sigma(j+1)}-t_{\sigma(j)},0)\dd t_j\\
=&(\kappa_\alpha\hat\beta)^m\sum_{\sigma\in S_m}\int_{[0,1]^m}\int_{0<t_{\sigma(1)}<\cdots<t_{\sigma(m)}<1}\prod_{j=1}^m e_{k_j}(s_j)|t_j-s_j|^{2H-2}
g_\rho(t_{\sigma(j+1)}-t_{\sigma(j)},0)
\dd s_j\dd t_j.
\end{split}
\end{equation}

Recall that $\alpha=1-\frac{1}{\rho}$ and $\kappa_\alpha g_\rho(1,0)=C_\alpha$ in Theorem \ref{thm:main}. By the scaling property of stable density function, we have $g_\rho(u,0)=g_\rho(1,0)u^{-1/\rho}=g_\rho(1,0)u^{\alpha-1}$. Then by $s'_j=1-s_j$ and $t'_j=1-t_j$,
\begin{align*}
\bE_{(1,0)}\Big[\prod_{j=1}^m m_{k_j}\Big]
&=\hat\beta^m
\int_{[0,1]^m}\int_{[0,1]^m}
\psi_m(t'_1,\cdots,t'_m)
\prod_{j=1}^m
|t_j'-s_j'|^{2H-2}
e_{k_j}(1-s_j')
\dd s'_j\dd t'_j,
\end{align*}
which verifies \eqref{eq:conv_mmt}.

Finally, to apply Hamburger's moment method, we need to verify the condition (see \cite[Theorem~3.3.12]{Dur19}) 
\begin{equation}\label{e:H-con}
\sup_{k\geq1} M_{2k}^{1/2k}/2k<\infty,
\end{equation}
where $M_k$ is the $k$-th moment of $\sum_{j=1}^n z_jm_j$.   For the $2k$-th moment, we have
\begin{align*}
M_{2k}
&=
\bE\left[
\left(\sum_{j=1}^n z_jm_j\right)^{2k}
\right] =
\sum_{k_1=1}^n\cdots\sum_{k_{2k}=1}^n
\left(\prod_{j=1}^{2k}z_{k_j}\right)
\bE\left[\prod_{j=1}^{2k}m_{k_j}\right].
\end{align*}
Since $n$ is fixed and $e_1,\ldots,e_n$ are bounded continuous
functions, we may bound all $|z_{k_j}|$ and
$\sup_{t\in[0,1]}|e_{k_j}(t)|$, for
$1\leq k_1,\ldots,k_{j}\leq n$, by a constant $C_{n,\boldsymbol{z}}$.
Using \eqref{eq:local_time_mixed_moment} above, we
first integrate with respect to $s_1,\ldots,s_{2k}$. For each
$j=1,\ldots,2k$ and fixed $t_j\in[0,1]$, we have
\begin{align*}
\int_0^1|t_j-s_j|^{2H-2}\dd s_j =
\frac{1}{2H-1}
\left(
t_j^{2H-1}+(1-t_j)^{2H-1}
\right)
\leq
\frac{2}{2H-1}.
\end{align*}
 Hence, after integrating out
$s_1,\ldots,s_{2k}$, all these contributions can be absorbed into a
constant $C_{n,z,\hat\beta,\alpha,H}$ depending only on $n,z,\hat\beta,\alpha$ and $H$. By symmetry, it suffices to consider the region
$0<t_1<\cdots<t_{2k}<1$, which gives a factor $(2k)!$. Therefore,
\begin{align*}
M_{2k}
\leq
C_{n,z,\hat\beta,\alpha,H}^{2k}(2k)!
\int_{0<t_1<\cdots<t_{2k}<1}
\frac{\dd t_1\cdots\dd t_{2k}}
{t_1^{1-\alpha}
\prod_{j=2}^{2k}(t_j-t_{j-1})^{1-\alpha}}.
\end{align*}
By \cite[Lemma A.5]{LSWZ26},
\begin{align*}
\int_{0<t_1<\cdots<t_{2k}<1}
\frac{\dd t_1\cdots\dd t_{2k}}
{t_1^{1-\alpha}
\prod_{j=2}^{2k}(t_j-t_{j-1})^{1-\alpha}}
=
\frac{\Gamma(\alpha)^{2k}}
{\Gamma(2k\alpha+1)}.
\end{align*}
Hence,
\begin{align*}
\frac{M_{2k}^{1/(2k)}}{2k}
\leq
\frac{C_{n,z,\hat\beta,\alpha,H}}{2k}
\left[
\frac{(2k)!\Gamma(\alpha)^{2k}}
{\Gamma(2k\alpha+1)}
\right]^{1/(2k)} \leq
\frac{C}{2k}
\left[
\frac{\sqrt{2k}(2k)^{2k}}
{\sqrt{2k\alpha}(2k\alpha)^{2k\alpha}}
\right]^{1/(2k)}\leq
\frac{C'}{(2k)^\alpha},
\end{align*}
where we have used Stirling's formula in the second inequality.
Since $\alpha>0$, the right-hand side is uniformly bounded in $k$, and this verifies condition~\eqref{e:H-con}. Thus the distribution of $\sum_{j=1}^n z_jm_j$ is determined by its
moments, and the moment method yields~\eqref{eq:cramer_wold}. This
proves \eqref{eq:Lb_to_m} and concludes the proof of Proposition
\ref{prop:finite_conv}.


\section{Proof of Proposition \ref{prop:uni_error}}\label{sec:proof_prop2}
In this section, we prove
\begin{equation}\label{eq:project_error1}
\lim\limits_{n\to\infty}\sup\limits_{N\geq1}\bbE\Big|\what{Z}_N^{\beta_N}-\what{Z}_{N,n}\Big|=0
\end{equation}
under the conditions assumed in Proposition \ref{prop:uni_error}.

Note that in \cite{LSWZ26}, to prove that  $\{\what{Z}_N^{\beta_N}\}_N$ is uniformly integrable, a ``good set'' of the renewal trajectories was introduced, which is
\begin{equation}\label{eq:good_set}
A_N^{(K)}:=\Big\{\tau:\beta_N^2\sum\limits_{n=1}^N\sum\limits_{m=1}^N\gamma(n-m)\ind_{\{n\in\tau\}}\ind_{\{m\in\tau\}}\leq K\Big\}.
\end{equation}
We define a partition function restricted on the good set $A_N^{(K)}$ by
\begin{equation}\label{def:truncated_Z}
\what{Z}_N^{(K)}:=\bE\Big[\exp\Big\{\Big[\sum\limits_{n=1}^N \beta_N\omega_n\ind_{\{n\in\tau\}}-\frac12\var_{\bbP}\Big(\sum\limits_{n=1}^N\beta_N\omega_n\ind_{\{n\in\tau\}}\Big)\Big]\ind_{A_N^{(K)}}\Big\}\Big].
\end{equation}
We emphasize that the indicator is on the exponent.

Then, to prove \eqref{eq:project_error1}, we use the following triangular inequality:
\begin{equation}\label{eq:project_error2}
\bbE\Big|\what{Z}_N^{\beta_N}-\what{Z}_{N,n}\Big|\leq\bbE\Big|\what{Z}_N^{\beta_N}-\what{Z}_N^{(K)}\Big|+\bbE\Big|\what{Z}_N^{(K)}-\what{Z}_{N,n}^{(K)}\Big|+\mathbb{E}\Big|\what{Z}_{N,n}-\what{Z}_{N,n}^{(K)}\Big|,
\end{equation}
where $\what{Z}_{N,n}^{(K)}:=\bbE[\what{Z}_N^{(K)}|\boldsymbol{\xi}_N]$ with $\boldsymbol{\xi}_N=\boldsymbol{\xi}_N^{(n)}:=(\xi_{1,N},\cdots,\xi_{n,N})$ (see \eqref{def:xi_kN}).

For the first term above, it has been shown by \cite[Equation (3.21)]{LSWZ26} that for some $C\in(0,\infty)$,
\begin{equation}\label{eq:term1}
\bbE\Big|\what{Z}_N^{\beta_N}-\what{Z}_N^{(K)}\Big|\leq C\sqrt{\bP\Big((A_N^{(K)})^c\Big)}+\bP\Big((A_N^{(K)})^c\Big).
\end{equation}
Moreover, by \cite[Equation (3.20)]{LSWZ26}, if $\alpha+2H>2$, then
\begin{equation}
\lim_{K\to\infty}\sup_{N\geq1}\bP\Big((A_N^{(K)})^c\Big)=0.
\end{equation}
Hence, for any $\gep>0$, we can choose $K$ large enough, such that
\begin{equation}\label{eq:term1a}
\sup\limits_{N\geq1}\bbE\Big|\what{Z}_N^{\beta_N}-\what{Z}_N^{(K)}\Big|<\frac{\gep}{3}.
\end{equation}

For the last term in \eqref{eq:project_error2}, we have that by Jensen's inequality
\begin{equation}
\mathbb{E}\Big|\what{Z}_{N,n}-\what{Z}_{N,n}^{(K)}\Big|=\bbE\Big|\bbE\Big[\what{Z}_N^{\beta_N}-\what{Z}_N^{(K)}\Big|\boldsymbol{\xi}_N\Big]\Big|\leq\bbE\Big|\what{Z}_N^{\beta_N}-\what{Z}_N^{(K)}\Big|,
\end{equation}
which has been controlled in \eqref{eq:term1a}.

Then Proposition \ref{prop:uni_error} is a direct consequence of the following lemma, which tackles the second term in \eqref{eq:project_error2}. 
\begin{lemma}
Let $\what{Z}_N^{(K)}$ and $\what{Z}_{N,n}^{(K)}$ be defined as above. Then for any fixed $K>0$,
\begin{equation}\label{eq:sup_L2_diff}
\lim\limits_{n\to\infty}\sup\limits_{N\geq1}\Big\|\what{Z}_N^{(K)}-\what{Z}_{N,n}^{(K)}\Big\|_{L^2(\bbP)}=0.
\end{equation}
\end{lemma}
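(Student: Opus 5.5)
Since $\what{Z}_{N,n}^{(K)}=\bbE[\what{Z}_N^{(K)}|\boldsymbol{\xi}_N]$ is an orthogonal projection in $L^2(\bbP)$, the plan is to write $\|\what{Z}_N^{(K)}-\what{Z}_{N,n}^{(K)}\|_{L^2}^2=\bbE[(\what{Z}_N^{(K)})^2]-\bbE[(\what{Z}_{N,n}^{(K)})^2]$ and compute both terms through two independent replicas $\tau,\tau'$. On the good set the exponent is a Wick exponential $\exp\{Y_N(\tau)-\frac12\var Y_N(\tau)\}$ with $\beta_N^2\var$ bounded by $K$; off the good set the integrand is $1$. Gaussian computations give
\begin{equation*}
\bbE[(\what{Z}_N^{(K)})^2]=\bE^{\otimes2}\Big[\exp\big\{\cov_\bbP(Y_N(\tau),Y_N(\tau'))\ind_{A_N^{(K)}}(\tau)\ind_{A_N^{(K)}}(\tau')\big\}\Big],
\end{equation*}
and similarly, using the formula \eqref{eq:cond_Z} (with the indicator inserted), $\bbE[(\what{Z}_{N,n}^{(K)})^2]$ equals the same expression with $\cov_\bbP(Y_N(\tau),Y_N(\tau'))$ replaced by $b_N(\tau)^TC_N^{-1}b_N(\tau')=\langle \sL_Nb_N(\tau),\sL_Nb_N(\tau')\rangle$. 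Thus the $L^2$-distance squared is $\bE^{\otimes 2}[e^{\Gamma_N}-e^{\Gamma_{N,n}}]$ restricted to $A\times A$, where $\Gamma_N(\tau,\tau')=\beta_N^2\sum_{i,j}\gamma(i-j)\ind_{i\in\tau}\ind_{j\in\tau'}$ and $\Gamma_{N,n}$ is its projection onto the span of the first $n$ directions.

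The key structural fact is that $\Gamma_N$ is the $\cH_N$-inner product of $f_\tau:=\beta_N\sum_i\ind_{i\in\tau}\delta_i$ and $f_{\tau'}$, where $\cH_N$ is the Hilbert space of $\omega$, while $\Gamma_{N,n}$ is the inner product of their projections $P_nf_\tau, P_nf_{\tau'}$ onto the span of $\xi_{1,N},\dots,\xi_{n,N}$. Hence $|\Gamma_{N,n}|\le \|P_nf_\tau\|\|P_nf_{\tau'}\|\le \|f_\tau\|\|f_{\tau'}\|\le K$ on the good set, and likewise $|\Gamma_N|\le K$ (this Cauchy--Schwarz decoupling is exactly where $A_N^{(K)}$ is used). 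So $|e^{\Gamma_N}-e^{\Gamma_{N,n}}|\le e^K|\Gamma_N-\Gamma_{N,n}|\le e^K\big(\|(I-P_n)f_\tau\|\sqrt K+\sqrt K\|(I-P_n)f_{\tau'}\|\big)$, and by Cauchy--Schwarz it suffices to prove
\begin{equation*}
\lim_{n\to\infty}\sup_{N}\bE\big[\|(I-P_n)f_\tau\|_{\cH_N}^2\ind_{A_N^{(K)}}\big]=0 .
\end{equation*}

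For this last step, which I expect to be the main obstacle (uniformity in $N$), I would compare with the continuum: $\|(I-P_n)f_\tau\|^2=\|f_\tau\|^2-|\sL_Nb_N(\tau)|^2$, whose expectation is bounded by $K$ and is uniformly integrable. For large $N$ (beyond some $N_0(n)$) I use the convergence established in the proof of Proposition~\ref{prop:finite_conv}: $\sL_Nb_N\to\boldsymbol m$ in distribution, while $\|f_\tau\|^2$ converges in law to $\|\kappa_\alpha\hat\beta\,\dd L\|_\cH^2$, the self-energy, jointly; so the expectation tends to $\bE[\sum_{k>n}m_k^2\wedge\cdot]$-type quantities, which vanish as $n\to\infty$ since $\sum_k m_k^2=\|\kappa_\alpha\hat\beta\,\dd L\|_\cH^2<\infty$ with finite mean when $\alpha+2H>2$ (the finiteness of self-energy from \cite{li.song.ea:26:on}). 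To make this uniform rather than just for each $N$, I would truncate: split $f_\tau$ into contributions of renewal points in $[\delta N, N]$ and near $0$, the latter having uniformly small $\cH_N$-norm in expectation by the estimates behind \cite[Equation (3.20)]{LSWZ26}; the smooth bulk part is then approximated uniformly in $N$ by a finite expansion in $e_k$ via Remark~\ref{extension-f}. For the finitely many small $N<N_0(n)$ a separate argument is needed; I would instead note $P_n$ increases to the identity on the finite-dimensional space $\cH_N$ only if the $e_k$ span it, which requires the step-function density of $\{e_k\}$ in $\cH$, so each fixed $N$ term also tends to $0$ as $n\to\infty$, giving the $\sup_N$ statement.
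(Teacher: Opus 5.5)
Up to the final step your argument is the paper's. The projection identity, the replica formulas, the bound $|\Gamma_N|,|\Gamma_{N,n}|\le K$ on $A_N^{(K)}\times(A_N^{(K)})'$ and the mean value theorem reduce the lemma to making $\bE\big[\|(I-P_n)f_\tau\|^2\big]=\bE\big[\var_\bbP(Y_N)-(\sL_Nb_N)^T(\sL_Nb_N)\big]$ small uniformly in $N$. (In fact $\Gamma_N-\Gamma_{N,n}=\langle(I-P_n)f_\tau,(I-P_n)f_{\tau'}\rangle$ exactly, which gives the paper's product bound; your additive bound also suffices.)

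The gap is in how you get uniformity in $N$. You let $N_0(n)$ depend on $n$, then dispose of ``the finitely many $N<N_0(n)$'' by showing that for each fixed $N$ the error tends to $0$ as $n\to\infty$. This does not yield $\lim_n\sup_N$, because the exceptional set $\{N<N_0(n)\}$ may grow with $n$. The truncation you propose as a repair does not work as described:
\begin{itemize}
\item the renewal points in $[\delta N,N]$ form nothing smooth;
\item the obstruction to approximating $f_\tau$ by finitely many $e_k$ uniformly in $N$ is the small-scale (near-diagonal) structure of $\tau$ throughout $[0,N]$, not its behaviour near the origin, so cutting at $\delta N$ does not address it;
\item Remark~\ref{extension-f} is a fixed-$N$ continuity statement (with constant $C_{H,N}$) and gives no approximation rate uniform in $N$.
\end{itemize}

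The missing idea is monotonicity in $n$. Since $\sigma(\xi_{1,N},\dots,\xi_{n,N})$ increases with $n$, Pythagoras shows that $\|\what Z_N^{(K)}-\what Z_{N,n}^{(K)}\|_{L^2(\bbP)}$ is non-increasing in $n$ for every $N$. So the argument runs as follows.
\begin{itemize}
\item Fix one $n_0$ with $\sum_{k>n_0}\bE_{(1,0)}[m_k^2]<\gep$.
\item Choose $N_0=N_0(n_0)$ from the $N\to\infty$ limit at this single $n_0$.
\item Monotonicity extends the resulting bound to all $n\ge n_0$ and $N\ge N_0$.
\item Now $\{N<N_0\}$ is a fixed finite set. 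For each such $N$, the vectors $(I_{j,N}(e_k))_{j\le N}$, $k\le n$, span $\bbR^N$ once $n$ is large, so the error is exactly $0$.
\end{itemize}

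For the limit at fixed $n_0$ you also do not need the joint convergence in law of $(\|f_\tau\|^2,\sL_Nb_N)$, which is established nowhere. Drop the indicator (since $\var_\bbP(R_N)\ge0$) and use only first moments:
\begin{itemize}
\item $\bE[\var_\bbP(Y_N)]\to(\kappa_\alpha\hat\beta)^2\bE_{(1,0)}[\iint|r-s|^{2H-2}\dd L_r\dd L_s]$, by Riemann sums and dominated convergence (this is where $\alpha+2H>2$ enters);
\item $\bE[(\sL_Nb_N)^T(\sL_Nb_N)]\to\sum_{k\le n_0}\bE_{(1,0)}[m_k^2]$, from \eqref{eq:conv_mmt};
\item the identity $\sum_{k\ge1}m_k^2=(\kappa_\alpha\hat\beta)^2\iint|r-s|^{2H-2}\dd L_r\dd L_s$.
\end{itemize}
Together these give $\lim_{N\to\infty}\bE[\var_\bbP(R_N)]=\sum_{k>n_0}\bE_{(1,0)}[m_k^2]$.
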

\begin{proof}
First, note that the tower rule implies
\begin{equation}\label{eq:truncate_error1}
\begin{split}
\Big\|\what{Z}_N^{(K)}-\what{Z}_{N,n}^{(K)}\Big\|_{L^2(\bbP)}^2=&\bbE\Big[\Big(\what{Z}_N^{(K)}\Big)^2\Big]-2\bbE\Big[\what{Z}_N^{(K)}\bbE\Big[\what{Z}_N^{(K)}\Big|\boldsymbol{\xi}_N\Big]\Big]+\bbE\Big[\Big(\bbE\Big[\what{Z}_N^{(K)}\Big|\boldsymbol{\xi}_N\Big]\Big)^2\Big]\\
=&\bbE\Big[\Big(\what{Z}_N^{(K)}\Big)^2\Big]-2\bbE\Big[\bbE\Big\{\what{Z}_N^{(K)}\bbE\Big[\what{Z}_N^{(K)}\Big|\boldsymbol{\xi}_N\Big]\Big|\boldsymbol{\xi}_N\Big\}\Big]+\bbE\Big[\Big(\bbE\Big[\what{Z}_N^{(K)}\Big|\boldsymbol{\xi}_N\Big]\Big)^2\Big]\\
=&\bbE\Big[\Big(\what{Z}_N^{(K)}\Big)^2\Big]-\bbE\Big[\Big(\bbE\Big[\what{Z}_N^{(K)}\Big|\boldsymbol{\xi}_N\Big]\Big)^2\Big].
\end{split}
\end{equation}
A straightforward computation yields (also see \cite[Page 2812]{LSWZ26})
\begin{equation}
\bbE\Big[\Big(\what{Z}_N^{(K)}\Big)^2\Big]=\bE\Big[\exp\Big\{\ind_{A_N^{(K)}}\ind_{\big(A_N^{(K)}\big)'}\beta_N^2\sum\limits_{n=1}^N\sum\limits_{m=1}^N\gamma(n-m)\ind_{\{n\in\tau\}}\ind_{\{m\in\tau'\}}\Big\}\Big],
\end{equation}
where $\tau'$ is an independent copy of $\tau$ and $(A_N^{(K)})'$ is the corresponding good set \eqref{eq:good_set} for $\tau'$.

As in \eqref{eq:cond_Z}, we have 
\begin{equation}
\bbE\Big[\what{Z}_N^{(K)}\Big|\boldsymbol{\xi}_N\Big]=\bE\Big[\exp\Big\{\ind_{A_N^{(K)}}\Big((\sL_N b_N)^T\sL_N\boldsymbol{\xi}_N-\frac12(\sL_N b_N)^T \sL_N b_N\Big)\Big\}\Big],
\end{equation}
and thus
\begin{equation}
\bbE\Big[\Big(\bbE\Big[\what{Z}_N^{(K)}\Big|\boldsymbol{\xi}_N\Big]\Big)^2\Big]=\bE\Big[\exp\Big\{\ind_{A_N^{(K)}}\ind_{\big(A_N^{(K)}\big)'}(\sL_N b_N)^T(\sL_N b'_N)\Big\}\Big],
\end{equation}
where we recall that $b_N=\cov_{\bbP}(Y_N,\boldsymbol{\xi}_N)$ and $b'_N=\cov_{\bbP}(Y_N',\boldsymbol{\xi}_N)$, respectively, and $C_N^{-1}=\sL_N^T\sL_N$.

Then by the equality $e^{\ind_A X}=(e^X-1)\ind_A+1$, we have the following bounds for \eqref{eq:truncate_error1}
\begin{equation}\label{eq:truncate_error2}
\begin{split}
\Big\|\what{Z}_N^{(K)}-\what{Z}_{N,n}^{(K)}\Big\|_{L^2(\bbP)}^2=&\bE\Big[\exp\Big\{\ind_{A_N^{(K)}}\ind_{\big(A_N^{(K)}\big)'}\beta_N^2\sum\limits_{n=1}^N\sum\limits_{m=1}^N\gamma(n-m)\ind_{\{n\in\tau\}}\ind_{\{m\in\tau'\}}\Big\}\Big]\\
&-\bE\Big[\exp\Big\{\ind_{A_N^{(K)}}\ind_{\big(A_N^{(K)}\big)'}(\sL_N b_N)^T(\sL_N b'_N)\Big\}\Big]\\
=&\bE\Big[\ind_{A_N^{(K)}}\ind_{\big(A_N^{(K)}\big)'}\Big(\exp\Big\{\cov_\bbP(Y_N,Y'_N)\Big\}-\exp\Big\{(\sL_N b_N)^T(\sL_N b'_N)\Big\}\Big)\Big],
\end{split}
\end{equation}
where $Y_N$ was defined by \eqref{eq:def_XN}.

We first estimate the difference between the two exponents on the right-hand side of \eqref{eq:truncate_error2}. Recall from
\eqref{eq:def_XN} that
\begin{equation*}
Y_N=a_N^T\boldsymbol{\xi}_N+R_N=(\sL_Nb_N)^T\sL_N\boldsymbol{\xi}_N+R_N.
\end{equation*}
For the independent renewal trajectory $\tau'$, we set $R_N'=Y_N'-(\sL_Nb_N')^T\sL_N\boldsymbol{\xi}_N$ correspondingly. Since both $R_N$
and $R_N'$ are orthogonal to ${\xi}_N$ and $\sL_N$ is deterministic, we have that
\begin{equation}\label{eq:differ}
\cov_{\bbP}(Y_N,Y_N')-(\sL_Nb_N)^T(\sL_Nb_N')
=\cov_{\bbP}(R_N,R_N').
\end{equation}
Moreover, $\var_{\bbP}(R_N)=\var_{\bbP}(Y_N)-(\sL_Nb_N)^T(\sL_N b_N)$, and $\var_{\bbP}(R_N')=\var_{\bbP}(Y_N')-(\sL_Nb_N')^T(\sL_Nb_N')$. Therefore, by \eqref{eq:differ} and Cauchy--Schwarz inequality, we have that
\begin{equation}\label{eq:differ+}
\begin{split}
&\left|\cov_{\bbP}(Y_N,Y_N')-(\sL_Nb_N)^T(\sL_Nb_N')\right|\\
\leq&
\big[\var_{\bbP}(Y_N)-(\sL_Nb_N)^T(\sL_Nb_N)\big]^{\frac12}
\big[\var_{\bbP}(Y_N')-(\sL_Nb_N')^T(\sL_Nb_N')\big]^{\frac12}.
\end{split}
\end{equation}

On the other hand, by another application of Cauchy-Schwarz inequality, we have that
\begin{equation}\label{eq:on_good_set1}
\begin{split}
\big|\cov_{\bbP}(Y_N,Y_N')\big|^2\leq&\var_\bbP(Y_N)\var_\bbP(Y_N')\\
=&\Big[\beta_N^2\sum\limits_{n=1}^N\sum\limits_{m=1}^N\gamma(n-m)\ind_{\{n\in\tau\}}\ind_{\{m\in\tau\}}\Big]\Big[\beta_N^2\sum\limits_{n=1}^N\sum\limits_{m=1}^N\gamma(n-m)\ind_{\{n\in\tau'\}}\ind_{\{m\in\tau'\}}\Big],
\end{split}
\end{equation}
and
\begin{equation}\label{eq:on_good_set2}
\begin{split}
\big|(\sL_N b_N)^T(\sL_Nb_N')\big|^2\leq&(\sL_Nb_N)^T(\sL_Nb_N)(\sL_Nb_N')^T(\sL_Nb_N')\\
=&\big(\cov_\bbP(\boldsymbol{\xi}^T_N,Y_N)C_N^{-1}\cov_\bbP(\boldsymbol{\xi}_N,Y_N)\big)\big(\cov_\bbP(\boldsymbol{\xi}^T_N,Y_N')C_N^{-1}\cov_\bbP(\boldsymbol{\xi}_N,Y_N')\big)\\
\leq&\var_\bbP(Y_N)\var_\bbP(Y_N')\\
=&\Big[\beta_N^2\sum\limits_{n=1}^N\sum\limits_{m=1}^N\gamma(n-m)\ind_{\{n\in\tau\}}\ind_{\{m\in\tau\}}\Big]\Big[\beta_N^2\sum\limits_{n=1}^N\sum\limits_{m=1}^N\gamma(n-m)\ind_{\{n\in\tau'\}}\ind_{\{m\in\tau'\}}\Big],
\end{split}
\end{equation}
where the second inequality is due to the following lemma.
\begin{lemma}\label{lem:matrix_Cauchy_Schwarz}
Let $Y$ be an $n$-dimensional column random vector and $X$ be a random variable. If $\var(Y)$ is invertible, then $\cov(Y^T,X)[\var(Y)]^{-1}\cov(Y,X)\leq\var(X)$.
\end{lemma}
\begin{proof}[Proof of Lemma \ref{lem:matrix_Cauchy_Schwarz}]
Take a vector $a=[\var(Y)]^{-1}\cov(Y,X)$. 
Then Cauchy--Schwarz inequality yields
\begin{equation*}
a^T\cov(Y,X)=\cov(a^TY,X)\leq\sqrt{\big[a^T\var(Y)a\big]\var(X)},
\end{equation*}
which completes the proof.
\end{proof}

By \eqref{eq:on_good_set1} and \eqref{eq:on_good_set2}, on the event  $A_N^{(K)}\times (A_N^{(K)})'$, we have that, by the definition of the good set in~\eqref{eq:good_set},
\begin{equation*}
\big|\cov_{\bbP}(Y_N,Y_N')\big|\leq K,\quad\text{and}\quad
\big|(\sL_Nb_N)^T(\sL_Nb_N')\big|\leq K.
\end{equation*}
Hence, by \eqref{eq:differ+} and the Lagrange mean value theorem, 
\begin{equation}\label{eq:exponential-error}
\begin{split}
&\Big|\ind_{A_N^{(K)}}\ind_{\big(A_N^{(K)}\big)'}\Big(\exp\Big\{\cov_\bbP(Y_N,X'_N)\Big\}-\exp\Big\{(\sL_N b_N)^T(\sL_N b'_N)\Big\}\Big)\Big|\\
\leq&e^K
\big[\var_{\bbP}(Y_N)-(\sL_Nb_N)^T(\sL_Nb_N)\big]^{\frac12}
\big[\var_{\bbP}(Y_N')-(\sL_Nb_N')^T(\sL_Nb_N')\big]^{\frac12}.
\end{split}
\end{equation}

In view of \eqref{eq:truncate_error2}--\eqref{eq:exponential-error}, we have that by the independence between $\tau$ and $\tau'$ and Jensen's inequality,
\begin{equation*}
\begin{split}
\Big\|\what{Z}_N^{(K)}-\what{Z}_{N,n}^{(K)}
\Big\|_{L^2(\bbP)}^2
&\leq e^K\bE\Big[\Big(\var_{\bbP}(Y_N)-(\sL_Nb_N)^T(\sL_Nb_N)\Big)^{\frac12}\Big]^2\\
&\leq e^K\bE\Big[\var_{\bbP}(Y_N)-(\sL_Nb_N)^T(\sL_Nb_N)\Big].
\end{split}
\end{equation*}

Now we estimate the expectation on the right-hand side above. Note that
\begin{align}\label{eq:total_covariance_discrete}
\bE\big[\var_{\bbP}(Y_N)\big]
&=\beta_N^2\sum_{m=1}^N\sum_{n=1}^N
\gamma( m- n)\bP(m\in\tau,n\in\tau)
\end{align}
Then by \cite[Lemma 3.4]{LSWZ26}, \eqref{eq:local_limit}, a Riemann sum approximation and the dominated convergence theorem,
\begin{equation}
\lim\limits_{N\to\infty}\bE\big[\var_{\bbP}(Y_N)\big]=
2(C_\alpha\hat\beta)^2
\int_{0<r<s<1}
r^{\alpha-1}(s-r)^{\alpha+2H-3}\dd r\dd s.
\end{equation}
By \cite[Lemma 4.2, Corollary 4.3]{li.song.ea:26:on},
\begin{equation}
\bE_{(1,0)}\bigg[\int_0^1\int_0^1|r-s|^{2H-2}\dd L_r\dd L_s\bigg]=2g_\rho(1,0)^2\int_{0<r<s<1}r^{\alpha-1}(s-r)^{\alpha+2H-3}\dd r\dd s.
\end{equation}
Then, we can conclude that
\begin{equation}
\lim_{N\to\infty}\bE\big[\var_{\bbP}(Y_N)\big]
=
\bE_{(1,0)}\left[
({\kappa_\alpha}\hat\beta)^2
\int_0^1\int_0^1
|r-s|^{2H-2}\dd L_r\dd L_s
\right].
\end{equation}

On the other hand, it follows from \eqref{eq:conv_mmt} that for any fixed $n$ (recall that $\sL_Nb_N$ depends on $n$),
\begin{equation}\label{eq:LNbN_limit}
\lim_{N\to\infty}\bE\Big[(\sL_Nb_N)^T(\sL_N b_N)\Big]=\sum_{k=1}^n\bE_{(1,0)}\big[m_k^2\big].
\end{equation}
By \cite[Remark 4.22]{li.song.ea:26:on}, 
\begin{equation}\label{eq:parseval_local_time}
\sum_{k=1}^\infty m_k^2
=(\kappa_\alpha\hat\beta)^2
\int_0^1\int_0^1|r-s|^{2H-2}\dd L_r\dd L_s,
\quad \bP_{(1,0)}\text{-a.s.},
\end{equation}
and it is $L^1(\bP_{(1,0)})$-integrable. Hence, for any fixed $n$,
\begin{equation}\label{eq:n_covariance_remainder}
\lim_{N\to\infty}
\bE\Big[\var_{\bbP}(Y_N)-(\sL_N b_N)^T(\sL_N b_N)\Big]
=\sum_{k=n+1}^\infty\bE_{(1,0)}\big[m_k^2\big].
\end{equation}

Given $\varepsilon>0$, since $\sum_{k=1}^{\infty}\bE_{(1,0)}[m_k^2]<\infty$,  by \eqref{eq:n_covariance_remainder}, we can choose $n_0=n_0(\gep)$ and $N_0=N_0(n_0)$, such that for $n=n_0$ and any $N\geq N_0$,
\begin{equation*}
\bE\Big[\var_{\bbP}(Y_N)-(\sL_Nb_N)^T(\sL_Nb_N)\Big]\leq\varepsilon.
\end{equation*}
Therefore,
\begin{equation}\label{eq:large-N-n0}
\Big\|
\what{Z}_N^{(K)}-\what{Z}_{N,n_0}^{(K)}\Big\|_{L^2(\bbP)}^2\leq e^K\varepsilon,
\quad\text{for any}~N\geq N_0.
\end{equation}

Note that for every $n\geq n_0$,
\begin{equation*}
\sigma(\xi_{1,N},\ldots,\xi_{n_0,N})\subset\sigma(\xi_{1,N},\ldots,\xi_{n,N}),
\end{equation*}
and
\begin{equation*}
\what{Z}_N^{(K)}
-\what{Z}_{N,n_0}^{(K)}
=
\what{Z}_N^{(K)}
-\what{Z}_{N,n}^{(K)}
+
\what{Z}_{N,n}^{(K)}
-\what{Z}_{N,n_0}^{(K)}.
\end{equation*}
Since $\what{Z}_{N,n}^{(K)}
-\what{Z}_{N,n_0}^{(K)}$ is $\sigma(\xi_{1,N},\ldots,\xi_{n,N})$-measurable
and 
$\bbE[\cdot|\xi_{1,N},\ldots,\xi_{n,N}]$
is the orthogonal projection onto the space generated by $\sigma(\xi_{1,N},\ldots,\xi_{n,N})$, it follows that by Pythagorean theorem. for all $n\ge n_0$, 
\begin{equation*}
\Big\|
\what{Z}_N^{(K)}
-\what{Z}_{N,n_0}^{(K)}
\Big\|_{L^2(\bbP)}^2
=
\Big\|
\what{Z}_N^{(K)}
-\what{Z}_{N,n}^{(K)}
\Big\|_{L^2(\bbP)}^2+\Big\|
\what{Z}_{N,n}^{(K)}
-\what{Z}_{N,n_0}^{(K)}
\Big\|_{L^2(\bbP)}^2\ge \Big\|
\what{Z}_N^{(K)}
-\what{Z}_{N,n}^{(K)}
\Big\|_{L^2(\bbP)}^2.
\end{equation*}
Combining this with \eqref{eq:large-N-n0}, we obtain that
\begin{equation}\label{eq:large-n_covariance_remainder}
\Big\|
\what{Z}_N^{(K)}
-\what{Z}_{N,n}^{(K)}
\Big\|_{L^2(\bbP)}^2
\leq
e^K\varepsilon,\quad\text{for any}~n\geq n_0~\text{and}~N\geq N_0.
\end{equation}

To conclude \eqref{eq:sup_L2_diff}, it remains to treat the case  when $N<N_0$ and $n$ is sufficiently large. It suffices to show that for any fixed $N$, there exists $n_N$, such that for any $n\geq n_N$,
\begin{equation}\label{eq:no_diff}
\what{Z}_{N,n}^{(K)}:=\bE\Big[\what{Z}_N^{(K)}\Big|\xi_{1,N},\cdots,\xi_{n,N}\Big]=\what{Z}_N^{(K)}.
\end{equation}
Indeed, once \eqref{eq:no_diff} is proved, for $1\leq N\leq N_0-1$, correspondingly, we have $n_1,\cdots,n_{N_0-1}$, and we set $\bar{n}:=\max\{n_1,\cdots,n_{N_0-1}\}$. Then we have that
\begin{equation}\label{eq:small_N}
\Big\|
\what{Z}_N^{(K)}
-\what{Z}_{N,n}^{(K)}
\Big\|_{L^2(\bbP)}^2
=0,\quad\text{for any}~n\geq\bar{n}~\text{and}~N< N_0.
\end{equation}
This, combined with \eqref{eq:large-n_covariance_remainder}, yields
\begin{equation}\label{eq:final_bound}
\sup\limits_{N\geq1}\Big\|
\what{Z}_N^{(K)}
-\what{Z}_{N,n}^{(K)}
\Big\|_{L^2(\bbP)}^2
\leq e^K\gep,\quad\text{for any}~n\geq(\bar{n}\vee n_0),
\end{equation}
which completes the proof.

Now we prove that for each fixed $N$,  \eqref{eq:no_diff} holds for sufficiently large $n$.   Clearly, $\what{Z}_N^{(K)}$ is $\sigma(\omega_1,\cdots,\omega_N)$-measurable and $\sigma(\xi_{1,N},\cdots,\xi_{n,N})\subset\sigma(\omega_1,\cdots,\omega_N)$ by \eqref{def:xi_kN}. Hence, to prove \eqref{eq:no_diff}, it suffices to show that $\sigma(\omega_1,\cdots,\omega_N)\subset\sigma(\xi_{1,N},\cdots,\xi_{n,N})$ for sufficiently large $n$. 

 By  Remark~\ref{extension-f},  there exists a positive constant $C_{H,N}$ such that 
$|I_{j,N}(f)|\leq C_{H,N}\|f\|_{\cH},
1\leq j\leq N,$
so each $I_{j,N}$ is a continuous linear functional on $\cH$. Hence,
\[
T_N(f):=\big(I_{1,N}(f),\ldots,I_{N,N}(f)\big)
\]
defines a continuous linear mapping from $\cH$ to $\bbR^N$. On the other hand, it is not hard to see that $T_N$ is onto. Indeed, for
any $\boldsymbol y=(y_1,\ldots,y_N)\in\bbR^N$, let $f_{\boldsymbol y}$
be the step function which is equal to $y_j$ on
$[1-j/N,1-(j-1)/N)$. Then $f_{\boldsymbol y}\in\cH$ and
$I_{j,N}(f_{\boldsymbol y})
=N\int_{(j-1)/N}^{j/N}f_{\boldsymbol y}(1-s)\dd s
=y_j$, for $1\leq j\leq N$.

Recall that $\{e_k\}_{k\geq1}$ is an orthonormal basis of $\cH$. Hence, for any
$f\in\cH$, there exists a sequence $\{f_m\}_{m\geq1}$ with
$f_m\in\operatorname{span}\{e_k:k\geq1\}$ such that $f_m\to f$ in $\cH$ as $m\to\infty$. By the continuity of $T_N$, $\lim_{m\to\infty}T_N(f_m)=T_N(f)$. By the linearity of $T_N$, $T_N(f_m)\in\operatorname{span}\{(I_{1,N}(e_k),\ldots,I_{N,N}(e_k)):k\geq1\}$ for all $m\geq1$. Therefore, we obtain that
\[
T_N(\cH)=\overline{
\operatorname{span}\big\{(I_{1,N}(e_k),\ldots,I_{N,N}(e_k)):k\geq1\big\}}=\bbR^N.
\]

Since $\bbR^N$ is a finite-dimensional space, any linear subspace of $\bbR^N$ is closed. Hence,
\[
\operatorname{span}\big\{(I_{1,N}(e_k),\ldots,I_{N,N}(e_k)):k\geq1\big\}=\bbR^N,
\]
Moreover, there exist finitely many indices $k_1,\ldots,k_N$, such that
\begin{equation*}
\operatorname{span}\big\{
(I_{1,N}(e_{k_i}),\ldots,I_{N,N}(e_{k_i})): 1\leq i\leq N\big\}=\bbR^N.
\end{equation*}
Let $n_N:=\max\{k_1,\ldots,k_r\}$. Then
\begin{equation*}
\operatorname{span}\big\{(I_{1,N}(e_k),\ldots,I_{N,N}(e_k)): 1\leq k\leq n_N\big\}=\bbR^N.
\end{equation*}

Now for each $j=1,\ldots,N$, there exist constants
$a_{j,1},\ldots,a_{j,n_N}$ such that
\begin{equation*}
\sum_{k=1}^{n_N}
a_{j,k}
\big(I_{1,N}(e_k),\ldots,I_{N,N}(e_k)\big)
=\mathbf e_j,
\end{equation*}
where $\mathbf e_j=(0,\cdots,0,1,0,\cdots,0)$ denotes the $j$-th standard basis vector of $\bbR^N$. Hence, by \eqref{def:xi_kN}, 
\begin{align*}
\sum_{k=1}^{n_N}a_{j,k}\xi_{k,N}
=
N^{-H}
\sum_{k=1}^{n_N}a_{j,k}
\sum_{\ell=1}^N I_{\ell,N}(e_k)\omega_\ell=
N^{-H}
\sum_{\ell=1}^N
\left(
\sum_{k=1}^{n_N}a_{j,k}I_{\ell,N}(e_k)
\right)\omega_\ell=
N^{-H}\omega_j.
\end{align*}
which shows $\operatorname{span}\{\omega_1,\ldots,\omega_N\}\subset
\operatorname{span}\{\xi_{1,N},\ldots,\xi_{n_N,N}\}$ and thus \eqref{eq:no_diff} holds for $n\ge n_N$. This completes the proof.
\end{proof}

\begin{remark}
One may choose a particular orthonormal basis for which the above $n_N$ can be taken as $N$. More precisely, let $\{e_k\}_{k\geq1}$ be obtained by applying the
Gram--Schmidt procedure to polynomials $\{s^\ell\}_{\ell\geq0}$.
Then for $0\leq \ell\leq N-1$ and $1\leq j\leq N$, we have
\[
I_{j,N}(s^\ell)
=
N\int_{(j-1)/N}^{j/N}(1-s)^\ell\dd s
=
\frac{(N-j+1)^{\ell+1}-(N-j)^{\ell+1}}
{(\ell+1)N^\ell}.
\]
We now check that the vectors 
\[
\big(I_{1,N}(s^\ell),\ldots,I_{N,N}(s^\ell)\big),
\qquad 0\leq \ell\leq N-1,
\]
are linearly independent, and therefore form a basis of $\bbR^N$. Note that
\[
\big(I_{1,N}(s^\ell),\ldots,I_{N,N}(s^\ell)\big)=\frac{1}{(\ell+1)N^l}\big(B_{\ell,1},\cdots,B_{\ell,N})\quad\text{with}~B_{\ell,j}=(N-j+1)^{\ell+1}-(N-j)^{\ell+1}.
\]
Then it suffices to show that $B_\ell:=(B_{\ell,1},\cdots,B_{\ell,N})$ for $0\leq\ell\leq N-1$ are linearly independent. 

Suppose that there are constants $c_0,\cdots,c_{N-1}$ such that
\[
\sum\limits_{\ell=0}^{N-1}c_\ell B_{\ell,j}=0, \quad\text{for all}~1\leq j\leq N.
\]
Then the polynomial $P_N(x)=\sum_{\ell=0}^{N-1}c_\ell \big((x+1)^{\ell+1}-x^{\ell+1}\big)$ has $N$ roots  $0,1,\cdots,N-1$, which implies $P_N(x)\equiv0$ by $\operatorname{deg}(P_N)\leq N-1$. Hence, $c_0=\cdots=c_{N-1}=0$.

Since the Gram--Schmidt procedure is an invertible
transform, it follows that
\[
\operatorname{Span}\left\{
\big(I_{1,N}(e_k),\ldots,I_{N,N}(e_k)\big):
1\leq k\leq N
\right\}
=
\bbR^N.
\]
Consequently,
\[
\sigma(\xi_{1,N},\ldots,\xi_{n,N})
=
\sigma(\omega_1,\ldots,\omega_N),
\quad\text{for any}~n\geq N.
\]
\end{remark}
\subsection*{Acknowledgment}
J.\ Song is partially supported by NSFC (No.\ 12471142) and the Fundamental Research Funds for the Central Universities. R.\ Wei is supported by NSFC (No.\ 12401170) and Xi’an Jiaotong-Liverpool University Research Development Fund RDF-23-01-024.

\bibliographystyle{plain}
\bibliography{project}
\end{document}